\documentclass{amsart}

\usepackage{amssymb,amsmath,amsfonts,amsthm,epsfig,amscd}
\usepackage{commath}
\usepackage{stmaryrd}
\usepackage[all,cmtip,poly]{xy}
\usepackage{svn}
\usepackage{mathtools}
\usepackage{mathrsfs}
\usepackage{tikz-cd}
\usepackage{hyperref}
\usepackage{nameref}
\usepackage{dsfont}
\usepackage{stackrel}

\newtheorem{lem}{Lemma}[section]

\newtheorem{definition}[lem]{Definition}

\newtheorem{cor}[lem]{Corollary}

\newtheorem{theorem}[lem]{Theorem}
\newtheorem{prop}[lem]{Proposition}

\newtheorem{conj}[lem]{Conjecture}

\theoremstyle{remark}
\newtheorem{rem}[lem]{Remark}

\newcommand{\GL}{\mathrm{GL}}
\newcommand{\SL}{\mathrm{SL}}

\def\UgD{U(\frak{g}_D)}

\def\wUgR{\widehat{U(\frak{g}_R)}}
\def\wUgK{\widehat{U(\frak{g})}_K}
\def\wUgnK{\widehat{U(\frak{g})_{n,K}}}

\def\wtM{\widetilde{M}}
\def\whM{\widehat{M}}

\def\gr{\mathrm{gr}}
\def\Gr{\mathrm{Gr}}
\def\Hom{\mathrm{Hom}}

\def\for{\mathrm{for}}

\newcommand{\la}{\mathrm{la}}
\newcommand{\an}{\mathrm{an}}

\newcommand{\CC}{{\mathbb C}}

\newcommand{\FF}{{\mathbb F}}

\newcommand{\NN}{{\mathbb N}}

\newcommand{\QQ}{{\mathbb Q}}
\newcommand{\RR}{{\mathbb R}}

\newcommand{\ZZ}{{\mathbb Z}}

\newcommand{\cO}{{\mathcal O}}

\makeatletter
\tikzset{
  column sep/.code=\def\pgfmatrixcolumnsep{\pgf@matrix@xscale*(#1)},
  row sep/.code   =\def\pgfmatrixrowsep{\pgf@matrix@yscale*(#1)},
  matrix xscale/.code=%
    \pgfmathsetmacro\pgf@matrix@xscale{\pgf@matrix@xscale*(#1)},
  matrix yscale/.code=%
    \pgfmathsetmacro\pgf@matrix@yscale{\pgf@matrix@yscale*(#1)},
  matrix scale/.style={/tikz/matrix xscale={#1},/tikz/matrix yscale={#1}}}
\def\pgf@matrix@xscale{1}
\def\pgf@matrix@yscale{1}
\makeatother

\title{Simple Iwasawa modules with large canonical dimension of quaternion algebra over $\QQ_p$}
\begin{document}
\author{Weibo Fu}
\email{wfu@math.princeton.edu}
\date{April,~2025}

\begin{abstract}
We construct certain absolutely irreducible Banach representations of the quaternion algebra of large canonical (Gelfand-Kirillov) dimension which yield counterexamples to a natural conjecture of Dospinescu-Schraen on the existence of an infinitesimal character and topological finite length for the locally analytic vectors of an absolutely irreducible Banach representation.
\end{abstract}
\maketitle
\tableofcontents

\section{Introduction}\label{intro}
Let $G$ be a connected reductive group over a local field and let $\frak{g}$ be its Lie algebra. 
The center $Z(\frak{g})$ of the enveloping algebra of $U(\frak{g})$ is of significant importance in the representation theory of $G$.

When the local field is $\RR$, Harish-Chandra established an equivalence between irreducible unitary representations of $G$ and irreducible infinitesimally unitary $(\frak{g}, K)$ modules by passing to $K$ finite vectors. It is clear from Schur's lemma that there exists a natural map
\[ \widehat{G(\RR)} \to \Hom_{\CC\mathrm{-alg}} (Z(\frak{g}),\CC), \] where $\widehat{G(\RR)}$ is the unitary dual of $\widehat{G(\RR)}$.

Understanding the relationship between “Banach” and “analytic” aspects has been beneficial in the development of $p$-adic Langlands program for $\GL_2(\QQ_p)$. Recently, Emerton, Gee, and Hellmann \cite{EGH23} formulate some conjectures separately in the “Banach” and “analytic” cases and their conjectural compatiblity in the emerging “categorical” $p$-adic Langlands program.

However, the concepts of “Banach” and “analytic” in the $p$-adic case for representation theory of $p$-adic analytic groups and their relationships are much more unclear and mysterious than in the classical Archimedean case. 
Let $L$ be a sufficiently large extension of $\QQ_p$ such that $G$ is split over. Let $\widehat{G(\QQ_p)}_L$ be the set of isomorphism classes of absolutely irreducible $L$-Banach space representations and $\widehat{G(\QQ_p)}_L^{\mathrm{unit}}$ be the set of isomorphism classes of absolutely irreducible unitary $L$-Banach space representations. For both sets $\widehat{G(\QQ_p)}_L$ and $\widehat{G(\QQ_p)}_L^{\mathrm{unit}}$, there exist irreducible Banach space representations whose locally analytic vectors are reducible (\cite{ST01}, \cite{Col10}, \cite{LXZ12}).
Even though irreducibility is not preserved, Dospinescu-Schraen \cite{DS13} asks the following natural and intriguing major question on the existence of an infinitesimal character on locally analytic vectors:
\begin{conj}\label{infinitesimal char on locally analytic vectors}
Let $\Pi$ be an absolutely irreducible admissible $L$-Banach space representation of $G(\QQ_p)$. Then the space $\Pi^\la$ of locally analytic vectors in $\Pi$ is a locally analytic $G(\QQ_p)$-representation of finite length, having an infinitesimal character.
\end{conj}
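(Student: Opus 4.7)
The plan is to dualize and work with coadmissible modules over the Fr\'echet-Stein distribution algebra $D(G(\QQ_p), L)$. By the Schneider-Teitelbaum theory, $\Pi^{\la}$ is an admissible locally analytic representation, and its strong continuous dual $M = (\Pi^{\la})'_b$ is a coadmissible $D(G(\QQ_p), L)$-module. Both conclusions of the conjecture translate into statements about $M$: the existence of an infinitesimal character corresponds to $Z(\mathfrak{g})$ acting on $M$ through a maximal ideal, and finite length is preserved under this duality.

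For the infinitesimal character, I would exploit the absolute irreducibility of $\Pi$ as a Banach representation. Each $z \in Z(\mathfrak{g})$ defines a continuous $G(\QQ_p)$-equivariant endomorphism of $\Pi^{\la}$, and the hope is that such an endomorphism extends uniquely to a bounded endomorphism of the ambient Banach space $\Pi$. A continuous Schur's lemma, using that $\End_{G(\QQ_p)}^{\mathrm{cont}}(\Pi) = L$ by absolute irreducibility, would then force the extension to be multiplication by a scalar $\lambda(z) \in L$, producing the desired character $Z(\mathfrak{g}) \to L$. For finite length, one would work inside the subcategory of coadmissible modules on which $Z(\mathfrak{g})$ acts through this character, and try to show that every topologically irreducible subquotient of $\Pi^{\la}$ is obtained from the Orlik-Strauch construction applied to a simple object in BGG category $\mathcal{O}$; finiteness of blocks of $\mathcal{O}$ would then bound the Jordan-H\"older length of $M$, while a canonical-dimension bound would control how many parabolic-induction pieces can occur.

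The main obstacle is the finite-length assertion, and with it the extension step in the Schur-type argument above. Unlike for $\GL_2(\QQ_p)$, where the Orlik-Strauch picture plausibly accounts for all topologically irreducible coadmissible objects, coadmissible $D(G(\QQ_p), L)$-modules of large canonical dimension need not be built out of finitely many Orlik-Strauch pieces: for richer groups, such as the multiplicative group of a non-split quaternion algebra over $\QQ_p$, there is room in the distribution algebra to produce coadmissible modules with infinitely many topologically irreducible subquotients. Once $\Pi^{\la}$ has infinite topological length, the continuous extension of the $Z(\mathfrak{g})$-action from $\Pi^{\la}$ to $\Pi$ is no longer automatic, so even the infinitesimal character half of the argument is fragile. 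I would therefore expect the proposed strategy to succeed only when one has an independent a priori bound on the topological length of $\Pi^{\la}$, and to break down precisely in the setting this paper studies.
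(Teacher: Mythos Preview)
The statement you are trying to prove is a \emph{conjecture}, and the paper does not prove it --- it disproves it. Theorem~\ref{counterexamples} constructs, for $G = D^\times$ the units of the quaternion algebra over $\QQ_p$, an absolutely irreducible admissible Banach representation $V$ whose locally analytic vectors $V^{\la}$ have infinite length and do \emph{not} admit an infinitesimal character. So there is no ``paper's own proof'' to compare against, and your proposal cannot succeed in general because the statement is false.

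To your credit, your final paragraph correctly locates the failure point. The step where you hope that the action of $z \in Z(\frak{g})$ on $\Pi^{\la}$ extends to a bounded endomorphism of $\Pi$ is exactly what breaks: differential operators are unbounded, and there is no mechanism forcing such an extension without an a priori structural constraint on $\Pi^{\la}$. The paper's counterexample makes this concrete. For the representation $V^2_\lambda$ (with $\lambda$ generic), Lemma~\ref{nonzero la rep} shows that $V^{2,\la}_{\lambda,\chi}$ is nonzero for \emph{every} infinitesimal character $\chi$, and Theorem~\ref{la dense} shows each such eigenspace is already dense in $V^2_\lambda$. Thus $V^{2,\la}_\lambda$ contains vectors of every infinitesimal character simultaneously (Proposition~\ref{no inf char}), so $Z(\frak{g})$ certainly does not act by a single scalar, and your Schur-type argument has no chance. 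Your instinct that anisotropic groups with room for large canonical dimension are dangerous is exactly right; the paper exploits precisely this, building a simple Iwasawa module of canonical dimension $2$ over a $3$-dimensional group. What you should take away is that your outline is not a proof sketch with a fixable gap --- it is an explanation of why the conjecture is plausible for groups like $\GL_2(\QQ_p)$ and why that plausibility evaporates once the Orlik--Strauch picture stops being exhaustive.
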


A literal consequence of their conjecture is the existence of a natural map
\[ \widehat{G(\QQ_p)}_L \to \Hom_{L\mathrm{-alg}} (Z(\frak{g})_L, L). \]
Not much was known about this conjecture beyond the torus case.
Dospinescu \cite{Dos12} shows the existence of
\[ \widehat{\GL_2(\QQ_p)}_L^{\mathrm{unit}} \to \Hom_{L\mathrm{-alg}} (Z(\frak{g})_L, L) \] by fully using the $p$-adic local Langlands correspondence for $\GL_2(\QQ_p)$.

But in this paper, we exhibit counterexamples of this conjecture for the quaternion algebra $D$. For simplicity, we will use $D^\times$ for the $\QQ_p$ points of the reductive group $D^\times$.
\begin{theorem}\label{counterexamples}
There exists an absolutely irreducible admissible representation $V$ of $D^\times$ such that its locally analytic vectors $V^\la$ have infinite length without an infinitesimal character.
\end{theorem}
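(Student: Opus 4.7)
The strategy is to realize $V$ as the continuous $L$-linear dual of a carefully chosen simple module $M$ over the Iwasawa algebra of a compact open subgroup of $O_D^\times$, with $M$ engineered to have Gelfand--Kirillov (canonical) dimension too large to admit a single infinitesimal character. Concretely, fix a uniform pro-$p$ open subgroup $H \subset O_D^\times$, of dimension $4$, and set $\Lambda := L[[H]]$. The central technical input, alluded to in the title, is the construction of a simple left $\Lambda$-module $M$ with canonical dimension $d(M) > 2 = \dim H - \dim Z(\frak{g}_L)$, together with a compatible $O_D^\times$-action which one extends to $D^\times$ by prescribing the action of a uniformizer. Simplicity is to be established by a graded-filtration argument over the associated graded ring $\gr \Lambda$, reducing a priori to an irreducibility check for $\gr M$.

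By the Schneider--Teitelbaum anti-equivalence, $V := M^\vee$ is then an absolutely irreducible admissible $L$-Banach representation of $D^\times$. Its locally analytic vectors $V^\la$ are described by duality as the coadmissible $D(H,L)$-module
\[
M^\la := D(H,L) \widehat{\otimes}_\Lambda M,
\]
and canonical dimension is preserved under passage from $\Lambda$ to $D(H,L)$ in this framework, so $d(M^\la) = d(M) > 2$.

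Both conclusions of the theorem follow from this dimension bound. If $V^\la$ had finite length, then $Z(\frak{g}_L)$ would act through a finite-dimensional quotient; devissage along the Jordan--H\"older series produces a filtration of $M^\la$ on which each subquotient has $Z(\frak{g}_L)$ acting by a single character, so each subquotient has canonical dimension at most $\dim H - \dim Z(\frak{g}_L) = 2$, contradicting $d(M^\la) > 2$. Exactly the same count, applied in one step rather than by devissage, rules out the existence of an infinitesimal character on $M^\la$ itself.

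The crux lies in the construction step: simple $\Lambda$-modules of canonical dimension strictly exceeding $2$ are subtle to produce and cannot arise from standard sources. Duals of smooth representations, quotients by characters of the Lie algebra, or principal-series-type constructions (the last not even available for anisotropic $D^\times$) all yield modules of dimension $\leq 2$, each automatically carrying an infinitesimal character after passage to $D(H,L)$. Overcoming this barrier requires a genuinely new Iwasawa-algebraic construction exploiting the non-commutativity of $\Lambda$, and proving simplicity --- equivalently, maximality of the chosen left ideal --- directly over $\Lambda$ rather than by reduction to classical results on $U(\frak{g})$ is expected to be the technical heart of the paper.
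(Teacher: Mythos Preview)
Your proposal identifies the right broad architecture---produce a simple Iwasawa module and dualize---but both substantive steps are genuine gaps rather than deferred details.

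First, the simplicity argument you sketch cannot work as stated. The associated graded $\gr\Lambda$ of the Iwasawa algebra of a uniform pro-$p$ group is a \emph{commutative} polynomial ring (Lazard), so for any $\Lambda$-module $M$ of positive canonical dimension, $\gr M$ has positive Krull dimension over a polynomial ring and is never simple as a graded module. For the paper's actual module one computes $\Gr M \cong k[W,V]/(V^2)$, visibly not irreducible. The paper proves simplicity by an entirely different and rather indirect route: it passes to the locally analytic side, shows that for \emph{every} infinitesimal character $\chi$ the eigenspace $V^{2,\la}_{\lambda,\chi}$ is nonzero, and---via a coset-by-coset decomposition of the distribution module $D^{2,\lambda,\chi}_r$ resting on an explicit $p$-adic valuation estimate for the coefficients of $\exp(c_w w)\exp(c_v v)$ expanded in a weight-module basis---proves that any nonzero closed locally analytic subrepresentation is stable under multiplication by characteristic functions of $T\cdot G^{p^n}$-cosets, hence contains $f\cdot C^{\mathrm{sm}}(G,K)$ and is dense in the Banach space. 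Simplicity follows from this density; no graded-ring argument is used, and indeed none is available.

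Second, your deduction of ``no infinitesimal character'' hinges on $d(M)>2$, but the paper's module has $d(M)=2$ exactly: it is $K[[U^2_D]]/(t_1-\lambda_1,\,t_2-\lambda_2)$, a two-relation quotient in a four-dimensional group (equivalently a one-relation quotient in the three-dimensional group $\overline{D_1}$). Your dimension count therefore does not apply to the paper's example, and you give no evidence that a simple $\Lambda$-module with $d(M)=3$ exists for this group. The paper instead argues directly: since $V^{2,\la}_{\lambda,\chi}\neq 0$ for all $\chi$ (an explicit generator survives the quotient), the space $V^{2,\la}_\lambda$ contains vectors of every infinitesimal character, hence admits none and has infinite length. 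No dimension inequality enters.
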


As our technique crucially uses the anisotropicness of $D^\times$ modulo its center, we may still ask for a finite extension $F$ over $\QQ_p$ and a quasi-split group $G$ over $F$:
\begin{conj}
Does Conjecture \ref{infinitesimal char on locally analytic vectors} hold for $G(F)$?
\end{conj}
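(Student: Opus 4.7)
The plan is to construct $V$ as the Schneider-Teitelbaum dual of a carefully chosen simple Iwasawa module of large canonical dimension, and then to rule out an infinitesimal character on $V^{\la}$ via a dimension bound special to $\mathfrak{gl}_2$. Fix a uniform pro-$p$ compact open subgroup $G_0 \subset D^\times$; this is a $4$-dimensional $p$-adic analytic group. Schneider-Teitelbaum duality provides an anti-equivalence between admissible $L$-Banach representations of $G_0$ and finitely generated modules over the Iwasawa algebra $\Lambda_L(G_0) := \Lambda(G_0) \otimes_{\ZZ_p} L$, under which absolutely irreducible Banach representations correspond to absolutely simple modules, and the canonical dimension of a Banach representation agrees with the Gelfand-Kirillov dimension of its dual module with respect to the $p$-adic filtration.

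The first step --- signaled already in the paper's title --- is to produce a nonzero absolutely simple $\Lambda_L(G_0)$-module $M$ of canonical dimension at least $3$. One builds $M$ explicitly as a cyclic quotient $\Lambda_L(G_0)/I$ for a carefully chosen prime ideal $I$, establishing simplicity and the dimension lower bound by a direct computation on the associated graded algebra that crucially exploits the anisotropicness of $D^\times$ modulo its center. Next, one extends $M$ to an absolutely irreducible admissible $L$-Banach representation $V$ of the full group $D^\times$ via a compact induction or stabilizer argument, accommodating the uniformizer and choosing a central character.

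To see that $V^{\la}$ admits no infinitesimal character, observe that $D$ splits over $L$, so $\mathfrak{g}_L \cong \mathfrak{gl}_{2,L}$ and $Z(\mathfrak{g}_L)$ is a polynomial ring in two generators. Any $U(\mathfrak{g}_L)$-module with an infinitesimal character $\chi$ factors through $U(\mathfrak{g}_L)/U(\mathfrak{g}_L)\ker(\chi)$, an algebra of Gelfand-Kirillov dimension exactly $2$ (for the $\mathfrak{sl}_2$ direction this is the dimension of the nilpotent cone). Translating back through Schneider-Teitelbaum duality, this would force the canonical dimension of $V$ to be at most $2$, contradicting Step~1. Infinite length of $V^{\la}$ then follows by exhibiting infinitely many distinct generalized infinitesimal characters in its support --- again read off from the explicit shape of $M$ --- since each cuts out a proper nonzero closed subrepresentation.

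The main obstacle is Step~1: producing an absolutely simple Iwasawa module of canonical dimension at least $3$ over the noncommutative complete filtered Auslander regular algebra $\Lambda_L(G_0)$. Such simple modules of large dimension are scarce and must be written down by hand, and the construction must crucially use specific features of the anisotropic group $D^\times/Z$ --- precisely the feature the paper singles out as the reason this counterexample is not expected to extend to quasi-split groups.
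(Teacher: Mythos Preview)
The statement you were given is not a theorem but an \emph{open question}: the paper asks whether Conjecture~\ref{infinitesimal char on locally analytic vectors} still holds for quasi-split $G(F)$, and explicitly does not answer it. There is nothing to prove here, and the paper offers no proof. Your proposal instead attempts to establish Theorem~\ref{counterexamples}, the construction of counterexamples for the anisotropic group $D^\times$. So at the level of matching statement to argument, the proposal is addressing the wrong target.

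Even read as an attempt at Theorem~\ref{counterexamples}, the strategy has a genuine gap. You aim to build a simple Iwasawa module of canonical dimension at least $3$ and then rule out an infinitesimal character by the bound ``infinitesimal character $\Rightarrow$ canonical dimension $\leq 2$''. But the paper's own representations have canonical dimension exactly $2$ (stated in the introduction), so if your dimension implication were valid it would not distinguish them, yet they \emph{are} counterexamples. The implication fails because the Gelfand--Kirillov dimension of $U(\mathfrak{g}_L)/\ker(\chi)$ constrains the $U(\mathfrak{g})$-module structure of $V^{\la}$, not the canonical dimension of the Iwasawa module dual to $V$ itself; passing between these two filtrations is exactly the subtle step, and no such clean bound is available. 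The paper proceeds entirely differently: it takes the explicit module $M(L,\lambda)$ of dimension $2$, proves directly (via the structural Proposition~\ref{M decomposition} and the coefficient estimate Proposition~\ref{g not in id}) that for \emph{every} infinitesimal character $\chi$ the subspace $V^{\la}[\chi]$ is nonzero and dense in $V$, and concludes that no single $\chi$ can act. The absence of an infinitesimal character is thus shown by exhibiting all of them simultaneously, not by a dimension obstruction.
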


Ardakov-Wadsley, in their breakthrough work \cite{AW13}, interpreted the canonical dimension of the Banach space representations geometrically and proved the first non-trivial lower bounds of admissible Banach space representations by half of the smallest possible dimension of coadjoint orbits for a split group over $\QQ_p$.

Our constructions for the representations in Theorem \ref{counterexamples} have canonical dimension equal to 2, and are first examples of absolutely irreducible Banach space representations of $G(\QQ_p)$ whose dimension is larger than half of the largest dimension of coadjoint orbits of $G_\CC$, to the knowledge of the author.

Although there is no "holonomic" concept for admissible Banach space representations, our constructed admissible irreducible Banach space representations are pathological as Dospinescu-Pa\v{s}k\={u}nas-Schraen proved that representations from the $p$-adic Jacquet-Langlands correspondence have Gelfand-Kirillov dimension 1 \cite[Thm 1.1]{DPS22}. Hu-Wang proved both the Gelfand-Kirillov dimension 1 and the finite length properties of the representations obtained from the $p$-adic Jacquet-Langlands correspondence under certain assumptions in \cite{HW24a}, \cite{HW24b}.

We briefly describe our simple construction of the representation for a compact subgroup $D_1 := 1 + p \cO_D \subset D^\times$, where $\cO_D$ is the ring of integers of quaternion algebra over $\QQ_p$. 
Let $\overline{D_1}$ be $D_1$ modulo its center. It is a compact 3-dimensional $p$-adic analytic group, isomorphic to $\ZZ_p^3$. 
Pick a rank 1 subgroup $L \subset \overline{D_1}$ such that
\[ L \cong \ZZ_p, ~ \overline{D_1} / L \cong \ZZ_p^2, \] with a generator $l \in L$ and a generic character $\lambda: L \to K$ where $K$ is an extension of $\QQ_p$.
Most efforts of this paper are to prove
\[ V(L, \lambda) := \{ f \in C(\overline{D_1}, K) ~ | ~ f(gt) = \lambda(t)f(g) ~~ \for ~ \forall ~ t \in L,  ~ g \in \overline{D_1} \} \]
is an irreducible admissible Banach space representation of $\overline{D_1}$. Or equivalently,
\[ M(L,\lambda): K[[\overline{D_1}]] / K[[\overline{D_1}]] \cdot (l-\lambda(l)) \] is a simple left Iwasawa module over the Iwasawa algebra $K[[\overline{D_1}]]$.

It was also widely believed that any irreducible $D$-module is automatically holonomic until J.T. Stafford constructed a counterexample in 1983 (See the introduction of \cite{BL88}).
Our result is similar in spirit to Stafford's construction of non-holonomic simple modules for Weyl algebras with large Gelfand-Kirillov dimensions \cite{Sta85}. Bernstein-Lunts later proved the genericity of principally generated maximal left ideal in the 2 dimensional case. In contrast, we consider principally generated maximal left ideal of the Iwasawa algebra in our result, and put a genericity condition on $\lambda$.

We also highlight that simpleness of the Iwasawa module $M(L,\lambda)$ and its dual statement in Banach space representation is purely a question in Iwasawa algebra or $p$-adic Banach representation theory of a $p$-adic analytic group. It is necessary for us to pass our problem to the locally analytic side and attack it there. This essential passage to the analytic side has come to fruition in addressing questions in Banach space representations, e.g. \cite{AW13}, \cite{Fu24}. In this paper, we not only need to consider analytic vectors of fixed radius but also locally analytic vectors of any possibly small radius.

We now further outline the proof strategy. Consider the locally analytic vector subspace $V(L, \lambda)^\la$, which is dense in $V(L, \lambda)$ by \cite[Thm 7.1]{ST03}.
We prove that for any infinitesimal character $\chi$, the subrepresentation $V(L, \lambda)^\la[\chi] \subset V(L, \lambda)^\la$ cut out by $\chi$ is nonzero, with a desired direct product structure, which we will state in Prop \ref{M decomposition}.
This structure implies that for any nonzero function $f \in V(L, \lambda)^\la[\chi] \subset C^\la(\overline{D_1}, K)$ and an open closed subset $U \subset \overline{D_1} / L$, the product $f \cdot \chi_{U \cdot L}$ of $f$ and characteristic function $\chi_{U \cdot L}$ supported in $U \cdot L$, belongs to $V(L, \lambda)^\la[\chi]$.
On the other hand, it is clear that such vectors $\{f \cdot \chi_{U \cdot L}\}$ are dense in $V(L, \lambda)$. We can therefore deduce the density of $V(L, \lambda)^\la[\chi]$ in $V(L, \lambda)$ for any $\chi$.
A key ingredient for our structural result Prop \ref{M decomposition} is a concrete computational estimate for the growth of $p$-adic valuation of coefficients of certain group elements expressed in the completed universal enveloping algebra (Prop \ref{g not in id}).

We also want to point out that a similar statement that $C^\an(\overline{D_1}, K)[\chi]$ is dense in $C(\overline{D_1}, K)$ can be viewed as a dual statement of the main results in \cite{AW14}, which plays an important role in some recent works such as \cite{DPS22}, \cite{Fu24} for different local questions with global applications. Pan proved some of these global applications addressed in \cite{DPS20}, \cite{DPS22} with geometrical methods in \cite{Pan22}, \cite{Pan25}.

\subsection*{Acknowledgments}
I would like to thank Gabriel Dospinescu, Yongquan Hu, and Benjamin Schraen and for helpful discussions and feedbacks. 

\section{Notation}
For a pro-$p$ group $G$, let \[ G^p := \left\langle g^p \mid g \in G\right\rangle. \]
We refer to \cite[Def 4.1]{DDSMS99} for the definition and basic properties of pro-$p$ uniform groups.

Let $\QQ_p \subseteq K$ be a finite extension $K$ over $\QQ_p$.
For a locally $\QQ_p$-analytic group $G$, let $C(G, K)$ and $C^\la(G, K)$, $C^{\text{sm}}(G, K)$ be respectively the space of $K$-valued continuous functions on $G$, the space of locally analytic functions on $G$, and the space of smooth functions on $G$, all valued in $K$.

Let $G$ be a uniform pro-$p$ group. 
We define the completed group rings by \[ \ZZ_p[[G]] := \lim _{\longleftarrow} \ZZ_p[G / N], ~ K[[G]] := K \otimes_{\ZZ_p} \ZZ_p[[G]], \]
where $N$ runs over all the open normal subgroups $N$ of $G$.

Lazard \cite{Laz65} defines a $\ZZ_p$-Lie algebra $L_G$ associated to $G$ (see also \cite[\S 4.5]{DDSMS99}).
We refer to \cite[\S 4]{Fu24} for a brief recall of some basic facts about $L_G$ with \begin{eqnarray}\label{zp Lie} \frak{g}_{\ZZ_p} := \frac{1}{p} L_G, \end{eqnarray} as well as the distribution algebra $D(G, K)$ with various completions $D_r(G, K)$ there.

Given a minimal topological generating set $\{g_1,\cdots,g_d\}$ of $G$, each element of $G$ can be written uniquely in the form $g_1^{\lambda_1}\cdots g_d^{\lambda_d}$ for some $\lambda_1,\cdots,\lambda_d \in \ZZ_p$.
Let $b_i := g_i - 1 \in R[G]$, and write \begin{eqnarray}\label{balpha} \mathbf{b}^{\alpha}=b_{1}^{\alpha_{1}} \cdots b_{d}^{\alpha_{d}} \in R[G] \end{eqnarray} for any $d$-tuple $\alpha \in \NN^d$.
We write $|\alpha| := \sum_{i=1}^d \alpha_i$.

The distribution algebra $D(G, K)$ can be identified with power series in $b_1,\cdots, b_d$ with convergence conditions
\[ D(G, K) = \left\{\sum_{\alpha \in \mathbb{N}^{d}} \lambda_{\alpha} \mathbf{b}^{\alpha} \mid \lambda_\alpha \in K, ~ \mathrm{and} ~ \for ~ \forall ~ 0 < r < 1, ~ \sup_{\alpha \in \mathbb{N}^{d}} |\lambda_\alpha| r^{|\alpha|} < \infty \right\}. \]

The Fr\'echet topology of $D(G, K)$ is defined by the family of norms
\begin{eqnarray}\label{r norm} \|\lambda\|_{r}:=\sup _{\alpha \in \mathbb{N}^{d}}\left|\lambda_{\alpha}\right| r^{|\alpha|} \end{eqnarray} for $0 < r < 1$, where the absolute value $|\cdot|$ is normalized as usual by $|p| = p^{-1}$. We let
\[ D_r(G, K) := \text{completion of $D(G, K)$ with respect to the norm} ~ \| ~ \|_r. \]
As a $K$-Banach space,
\begin{eqnarray}\label{Dr series}
 D_r(G, K) = \left\{\sum_{\alpha \in \mathbb{N}^{d}} \lambda_{\alpha} \mathbf{b}^{\alpha} \mid \lambda_\alpha \in K, ~ \sup_{\alpha \in \mathbb{N}^{d}} |\lambda_\alpha| r^{|\alpha|} < \infty \right\}. 
\end{eqnarray}

We consider natural extensions of the Iwasawa algebra $K[[G]]$ as 
\begin{eqnarray}\label{ST dis} K[[G]] \to D(G, K) \to D_r(G, K), \end{eqnarray} by \cite[Thm 4.11]{ST03}, and microlocalisation map
\begin{eqnarray}\label{micro} K[[G]] \to \wUgK \end{eqnarray} by construction of \cite[\S 10]{AW13}, where the completed enveloping algebras are defined as
\[ \widehat{U(\frak{g}_{\ZZ_p})} :=\varprojlim_{a} \left(\frac{U(\mathfrak{g}_{\ZZ_p})}{p^{a} U(\mathfrak{g}_{\ZZ_p})}\right), ~ \wUgK := \widehat{U(\frak{g}_{\ZZ_p})}\otimes_{\ZZ_p} K. \]
For a finitely generated $K[[G]]$-module $\wtM$, we use $\whM := \wUgK \otimes_{K[[G]]} \wtM$ to denote the \emph{microlocalisation} of $\wtM$.

\section{A uniform pro-$p$ subgroup of $p$-adic Lie group $D^\times$ and its associated Lie algebra}\label{padic quaternion}
Let $p \geq 5$ be a prime. 
Let $D$ be the unique non-split central division (quaternion) algebra of dimension 4 over $\QQ_p$.
For the ramified quadratic extension $\QQ_p(\sqrt{p})$, there is an embedding $\QQ_p(\sqrt{p}) \hookrightarrow D$ with elements $\tau \in D^\times$, $\iota \in \ZZ_p^\times \backslash (\ZZ_p^\times)^2$ such that $D = \QQ_p(\sqrt{p}) \oplus \QQ_p(\sqrt{p}) \tau$, $\tau^2 = \iota$, and conjugation of $\tau$ stabilizes $\QQ_p(\sqrt{p})$ and induces the nontrivial Galois automorphism \[ \tau x \tau^{-1} = \bar{x}, ~~ \forall x \in \QQ_p(\sqrt{p}). \]
We have the following embedding of $\QQ_p$-algebras \begin{eqnarray}\label{quater embed}
D \xrightarrow{\imath} M_2(\QQ_p(\sqrt{p})), \quad a+b \tau \mapsto \begin{pmatrix} a & \iota b \\ \bar{b} & \bar{a}\end{pmatrix},
\end{eqnarray} inducing the isomorphism \[ D \otimes_{\QQ_p} \QQ_p(\sqrt{p}) \simeq M_2(\QQ_p(\sqrt{p})). \]

We may define $v_D(a) := v_p(\det(\imath(a)))$, where $v_p$ is the $p$-adic valuation on $\QQ_p(\sqrt{p})$ normalized so that $v_p(p) = 1$.
Let $I$ be the identity of $D \xrightarrow{\imath} M_2(\QQ_p(\sqrt{p}))$.
Let $\cO_D := \{ a \in D ~ | ~ v_D(a) \geq 0 \}$ be the ring of integers with the maximal ideal $\frak{p}_D :=  \{ a \in D ~ | ~ v_D(a) > 0 \}$, which is principally generated by the uniformizer $\sqrt{p I} := \imath^{-1} \left( \begin{pmatrix} \sqrt{p} & 0 \\ 0 & -\sqrt{p} \end{pmatrix} \right) $.
The residue field $k_D := \cO_D / \frak{p}_D$ is isomorphic to $\FF_{p^2}$.

Let $D^\times$ (resp. $\cO_D^\times$) be the group of invertible elements of $D$ (resp. $\cO_D$). 
Via (\ref{quater embed}) $D^\times \subset \GL_2(\QQ_p(\sqrt{p}))$ is a locally $\QQ_p$-analytic subgroup of $\GL_2(\QQ_p(\sqrt{p}))$.
We define compact open normal (principal) subgroups \begin{eqnarray}\label{principal subgps}
U^n_D := 1+(\sqrt{p I})^n \cO_D, ~~ n \geq 1
\end{eqnarray} 
\begin{eqnarray}
\label{GL2 principal subgps}
U^n_{\GL_2} := 1+(\sqrt{p I})^n M_2(\ZZ_p(\sqrt{p})), ~~ n \geq 1
\end{eqnarray}
of $D^\times$ and $\GL_2(\ZZ_p(\sqrt{p}))$. We have
\[ D^{\times}=\mathcal{O}_D^{\times} \rtimes (\sqrt{p I})^{\mathbb{Z}}, \quad \mathcal{O}_D^{\times} / U_D^1 \cong \mathbb{F}_{p^2}^{\times}. \]

\begin{lem}\label{p power principal}
For any given positive integer $n$, every element of $U^{2(n+1)}_D$ (resp. $U^{2(n+1)}_{\GL_2}$) is the $p$-th power of an element of $U^{2n}_D$ (resp. $U^{2n}_{\GL_2}$).
Therefore we have \[ (U^{2n}_D)^p = U^{2(n+1)}_D, ~ (U^{2n}_{\GL_2})^p = U^{2(n+1)}_{\GL_2} . \]
\end{lem}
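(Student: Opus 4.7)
The plan is to identify these principal congruence subgroups with $1 + p^n \cO_D$ (resp.\ $1 + p^n M_2(\ZZ_p(\sqrt{p}))$) and then run the standard $p$-adic exponential/logarithm argument. Since the uniformizer satisfies $(\sqrt{pI})^2 = pI$, we have $U^{2n}_D = 1 + p^n \cO_D$ and analogously $U^{2n}_{\GL_2} = 1 + p^n M_2(\ZZ_p(\sqrt{p}))$. I will treat only the $D^\times$ case; the $\GL_2$ case proceeds identically with $\cO_D$ replaced by $M_2(\ZZ_p(\sqrt{p}))$.

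First I would check that for $n \geq 1$ and $p \geq 5$ the series
\[ \log(1+x) = \sum_{k \geq 1} (-1)^{k+1} \frac{x^k}{k}, \qquad \exp(y) = \sum_{k \geq 0} \frac{y^k}{k!} \]
converge and furnish mutually inverse bijections $1 + p^n \cO_D \rightleftarrows p^n \cO_D$. Convergence follows from $v_p(x^k/k) \geq nk - v_p(k) \to \infty$ and $v_p(y^k/k!) \geq nk - (k-1)/(p-1) \to \infty$. The potential worry here is that $\cO_D$ is non-commutative, but each series involves only a single element, so the standard formal identities $\log \circ \exp = \id$, $\exp \circ \log = \id$, and $\exp(py) = \exp(y)^p$ reduce to their commutative counterparts.

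For the inclusion $(U^{2n}_D)^p \subseteq U^{2(n+1)}_D$, given $g = 1 + p^n a$ the binomial expansion $g^p = \sum_{k=0}^p \binom{p}{k} p^{nk} a^k$ is valid because $1$ commutes with $a$; its $k=1$ term is $p^{n+1} a$, and every other nonzero term has $p$-adic valuation at least $\min(1 + 2n,\, np) \geq n+1$, putting $g^p$ in $1 + p^{n+1}\cO_D = U^{2(n+1)}_D$. Conversely, given $h \in U^{2(n+1)}_D$, set $y := p^{-1}\log(h) \in p^n\cO_D$ and $g := \exp(y) \in U^{2n}_D$; then $g^p = \exp(py) = \exp(\log h) = h$ exhibits $h$ as a $p$-th power, and combining the two directions yields $(U^{2n}_D)^p = U^{2(n+1)}_D$.

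Since the computation is routine there is no serious obstacle; the only point requiring care is keeping the non-commutative environment in mind when invoking binomial and $\exp$/$\log$ identities, which is harmless here precisely because each manipulation is performed inside the commutative subring $\ZZ_p[x]$ or $\ZZ_p[y]$ generated by a single element.
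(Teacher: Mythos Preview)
Your argument is correct and is essentially the same as the paper's: the paper simply invokes \cite[Lem 5.1]{DDSMS99}, whose proof is precisely the $\exp$/$\log$ computation you wrote out, transported via the embedding (\ref{quater embed}) to matrices over $\ZZ_p(\sqrt{p})$. Your care about noncommutativity is well placed but, as you note, harmless since every identity used lives in the commutative subring generated by a single element.
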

\begin{proof}
With the realization of $U^n_D$ via (\ref{quater embed}), the proof of \cite[Lem 5.1]{DDSMS99} applies here.
\end{proof}

\begin{cor}
The pro-$p$ groups $U^{2n}_D < U^{2n}_{\GL_2}$ are uniform for any positive integer $n$.
\end{cor}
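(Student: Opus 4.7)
The plan is to verify the definition of uniform from \cite[Def 4.1]{DDSMS99}: a topologically finitely generated pro-$p$ group $G$ is uniform iff it is powerful and the indices $[P_i(G) : P_{i+1}(G)]$ of its lower $p$-series are constant in $i$. Both $U^{2n}_D$ and $U^{2n}_{\GL_2}$ are compact open subgroups of finite-dimensional $p$-adic analytic groups, so topological finite generation is automatic; the real content is powerfulness and constancy of the indices.

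For powerfulness (with $p \geq 5$ odd, the requirement is $[G,G] \subseteq G^p$), I would first verify the commutator estimate $[U^a_D, U^b_D] \subseteq U^{a+b}_D$ by a short computation: for $x = 1 + \alpha$ and $y = 1 + \beta$ with $\alpha \in \frak{p}_D^a$, $\beta \in \frak{p}_D^b$, one has $xy - yx = \alpha\beta - \beta\alpha \in \frak{p}_D^{a+b}$, so $[x,y] = 1 + (xy-yx)(yx)^{-1} \in U^{a+b}_D$. Specializing to $a = b = 2n$, and using $4n \geq 2(n+1)$ for $n \geq 1$, this yields
\[ [U^{2n}_D, U^{2n}_D] \subseteq U^{4n}_D \subseteq U^{2(n+1)}_D = (U^{2n}_D)^p, \]
where the last equality is Lemma \ref{p power principal}. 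The identical argument applies to $U^{2n}_{\GL_2}$.

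For constancy of the quotient indices: powerful pro-$p$ groups $G$ satisfy $P_{i+1}(G) = P_i(G)^p$ with each $P_i(G)$ again powerful (\cite[Thm 3.6]{DDSMS99}), so iterating Lemma \ref{p power principal} gives $P_i(U^{2n}_D) = U^{2(n+i-1)}_D$. The successive quotient is
\[ U^{2(n+i-1)}_D / U^{2(n+i)}_D \cong \frak{p}_D^{2(n+i-1)}/\frak{p}_D^{2(n+i)} \cong \cO_D/p\cO_D, \]
a group of constant order $p^4$ (since $\cO_D$ is free of rank $4$ over $\ZZ_p$). The analogous identification for $U^{2n}_{\GL_2}$ gives $M_2(\ZZ_p(\sqrt{p})/p)$ of constant order $p^8$ at each step. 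Constancy of the indices follows, completing the verification of uniformity for both groups.

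No substantial obstacle arises here; the argument is essentially bookkeeping around Lemma \ref{p power principal} and the elementary commutator estimate, with the hypothesis $n \geq 1$ built in precisely to guarantee $4n \geq 2(n+1)$. The previous lemma has already done the real work.
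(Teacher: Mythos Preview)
Your proof is correct and follows essentially the same approach as the paper: verify powerfulness plus constancy of the lower $p$-series indices, with Lemma \ref{p power principal} doing the work of identifying $P_i$ with the principal congruence filtration. The only cosmetic difference is that the paper cites \cite[Prop 1.14]{DDSMS99} for the commutator estimate yielding powerfulness, whereas you compute it directly; your treatment is in fact a bit more complete, since the paper only writes out the case $U^2_D$.
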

\begin{proof}
For simplicity, we only give a proof for $U^2_D$.
For any given positive integer $n$, we have \[ U^{2n}_D / U^{2(n+1)} \simeq \cO_D / \frak{p}_D^2 \simeq \cO_D / (p) \] as the additive abelian group.
The group $U^2_D$ is finitely generated and powerful in the sense of \cite[Def 3.1]{DDSMS99} by \cite[Prop 1.14]{DDSMS99} and the above lemma.
By \cite[Thm 3.6]{DDSMS99} and the above lemma, for all $i \geq 1$, we have $P_i(U^2_D) = U^{2i}_D$ and
\[ |P_i(G): P_{i+1}(G)| = |\cO_D / (p)| = p^4. \]
\end{proof}

Let $Z_D$ be the center of $D^\times$ which is isomorphic to $\QQ_p^\times$. The subgroup $Z_D \cO_D^\times$ is of index 2 in $D^\times$.
Let $T_D$ be the pullback of diagonal torus via $\imath$ as a subgroup of $D^\times$ which is isomorphic to $\QQ_p(\sqrt{p})^\times$.
We define \[ T^n_D := T_D \cap U^n_D, ~~ n \geq 1. \]
These locally $\QQ_p$-analytic subgroups are isomorphic to $1+(\sqrt{pI})^n \cO_{\QQ_p(\sqrt{p})} \simeq \ZZ_p \oplus \ZZ_p$.
In particular, \[ T_D \simeq T^1_D \times \mu_{p-1} \times (\sqrt{pI})^\ZZ. \]

Consider the associated ($\ZZ_p$)-Lie algebra $\frak{g}\frak{l}_{2,\ZZ_p(\sqrt{p})}$ for $U^2_{\GL_2}$ according to (\ref{zp Lie}) via the log map
\[ \log: U^2_{\GL_2} \xrightarrow[\sim]{\log} p M_2(\ZZ_p(\sqrt{p})) \xrightarrow[\sim]{\cdot \frac{1}{p}} M_2(\ZZ_p(\sqrt{p})) \simeq \frak{g}\frak{l}_{2,\ZZ_p(\sqrt{p})}, \]
inducing the sub Lie algebra $\frak{g}_D$ associated to the subgroup $U^2_D$, identified with $\cO_D$
\begin{eqnarray}\label{log 2D} \log: U^2_D \xrightarrow[\sim]{\log} p\cO_D \xrightarrow[\sim]{\cdot \frac{1}{p}} \cO_D \simeq \frak{g}_D. \end{eqnarray}

\begin{lem}\label{log iso}
For any positive integer $n \geq 1$, the log map restricts to 
\[ \log: U^{2n}_D \xrightarrow{\sim} p^{n-1} \frak{g}_D, \]
\[ \log: U^{2n}_{\GL_2} \xrightarrow{\sim} p^{n-1} \frak{g}_{\GL_2}. \]
\end{lem}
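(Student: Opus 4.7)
The plan is to prove both bijections by analyzing the convergence of the classical power series
\[ \log(1+x) = \sum_{k \geq 1} (-1)^{k+1} \frac{x^k}{k}, \qquad \exp(y) = \sum_{k \geq 0} \frac{y^k}{k!}, \]
following the convention of (\ref{log 2D}) that the ``$\log$'' appearing in the lemma is the composition of the classical logarithm with multiplication by $\tfrac{1}{p}$. Since the arguments for $U^{2n}_D$ and $U^{2n}_{\GL_2}$ are identical after replacing $\cO_D$ by $M_2(\ZZ_p(\sqrt{p}))$, I focus on the former; the noncommutativity of the ambient ring is harmless because each evaluation of $\log$ or $\exp$ takes place inside the commutative closed subring generated by a single element.

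First I would unpack the targets. Since $(\sqrt{pI})^2 = pI$, one has $U^{2n}_D = 1 + p^n \cO_D$, so the assertion is equivalent to showing that the classical logarithm induces a bijection $U^{2n}_D \xrightarrow{\sim} p^n \cO_D$ (the subsequent factor $\tfrac{1}{p}$ then identifies $p^n \cO_D$ with $p^{n-1} \frak{g}_D$). Normalize $v_D$ by $v_D(\sqrt{pI}) = 1$, so that $v_D(p) = 2$ and $v_D(k) = 2 v_p(k)$ for $k \in \ZZ_{\geq 1}$.

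Next, for $x \in p^n \cO_D$ I would estimate term by term:
\[ v_D\!\left(\frac{x^k}{k}\right) \geq 2nk - 2 v_p(k) = 2n + 2\bigl(n(k-1) - v_p(k)\bigr). \]
The inequality $p^{v_p(k)} \leq k$ gives $v_p(k) \leq k - 1$, strictly so for $k \geq 2$ and $p \geq 3$. Hence the $\log$ series converges to an element of $p^n \cO_D$ congruent to $x$ modulo $p^{n+1} \cO_D$. For the reverse direction, using Legendre's bound $v_p(k!) \leq (k-1)/(p-1)$ together with $p \geq 5$, for $y \in p^n \cO_D$ one obtains
\[ v_D\!\left(\frac{y^k}{k!}\right) \geq 2nk - \frac{2(k-1)}{p-1} \geq k\bigl(2n - \tfrac{1}{2}\bigr), \]
which tends to infinity. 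Hence $\exp(y)$ converges, lies in $1 + p^n \cO_D = U^{2n}_D$, and agrees with $1 + y$ modulo $p^{n+1} \cO_D$. The formal identities $\exp \circ \log = \id$ and $\log \circ \exp = \id$ in $\QQ[[T]]$ then supply mutual inverses and complete the bijection.

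I expect no genuine obstacle: the entire argument is a routine $p$-adic convergence computation. The only point requiring care is the bookkeeping between the valuation $v_D$ (with $v_D(p) = 2$) and the classical $v_p$ controlling the denominators of the series; the standing assumption $p \geq 5$ from Section \ref{padic quaternion} is more than enough to force strict positivity of the limiting exponents above.
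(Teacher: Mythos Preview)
Your argument is correct but takes a genuinely different route from the paper. The paper's proof is a two-line reduction to the $n=1$ case (which is precisely the definition in (\ref{log 2D})) via Lemma~\ref{p power principal}: any $g\in U^{2n}_D$ can be written as $g_2^{p^{n-1}}$ for some $g_2\in U^2_D$, so $\log g = p^{n-1}\log g_2 \in p^{n-1}\frak{g}_D$, and surjectivity is read off the same way. Your approach instead carries out a self-contained term-by-term convergence analysis of the $\log$ and $\exp$ series and then invokes the formal inverse identities; this is more elementary and avoids any dependence on Lemma~\ref{p power principal} or on the $n=1$ case as prior input. Two minor remarks: the congruence refinements you state (e.g.\ $\exp(y)\equiv 1+y \pmod{p^{n+1}\cO_D}$) are not actually used, since bijectivity already follows from $\exp\circ\log=\id$ and $\log\circ\exp=\id$; and for the $\GL_2$ case, since $M_2(\ZZ_p(\sqrt{p}))$ is not a division ring, the estimates should be phrased via a submultiplicative ultrametric norm (the sup-norm on entries suffices) rather than $v_D$, but the same inequalities go through verbatim.
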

\begin{proof}
By Lem \ref{p power principal}, any element $g \in U^{2n}_D$ (resp. $U^{2n}_{\GL_2}$) can be expressed as $g = g_2^{p^{n-1}}$ for some $g_2 \in U^2_D$ (resp. $U^2_{\GL_2}$).
We have $\log(g) = p^{n-1} \log(g_2) \in p^{n-1} \frak{g}_D$ (resp. $\frak{g}_{\GL_2}$).
\end{proof}

The $\ZZ_p$-action on $\frak{g}\frak{l}_{2,\ZZ_p(\sqrt{p})}$ obviously extends to a $\ZZ_p(\sqrt{p})$-action, equipping $\frak{g}\frak{l}_{2,\ZZ_p(\sqrt{p})}$ with the standard $\frak{g}\frak{l}_2$-Lie algebra structure over $\ZZ_p(\sqrt{p})$.
Therefore there is a natural inclusion
\begin{eqnarray}\label{sub Lie sqp} \frak{g}_D \hookrightarrow \frak{g}_D \otimes_{\ZZ_p} \ZZ_p(\sqrt{p}) \hookrightarrow \frak{g}\frak{l}_{2, \ZZ_p(\sqrt{p})}. \end{eqnarray}
Let $\frak{t}_0$ be the (sub) Lie algebra associated to the (sub) torus $T^2_D$. 
Let \[ e := \begin{pmatrix} 0 & 1 \\ 0 & 0 \end{pmatrix}, ~ f := \begin{pmatrix} 0 & 0 \\ 1 & 0 \end{pmatrix} \in \frak{g}\frak{l}_{2, \ZZ_p(\sqrt{p})}. \]
We use $\cO = \ZZ_p(\sqrt{p})$ for the rest of the section to ease notations.
We have the weight decomposition with respect to $\cO \cdot \frak{t}_0 := \frak{t}_0 \otimes_{\ZZ_p} \cO$:
\begin{eqnarray}\label{integral wd} \frak{g}_D \otimes_{\ZZ_p} \cO \subsetneq \cO \cdot e \oplus \cO \cdot \frak{t}_0 \oplus \cO \cdot f. \end{eqnarray}
Note that the second map of (\ref{sub Lie sqp}) is not surjective since $e$ and $f$ are not in $\frak{g}_D \otimes_{\ZZ_p} \cO$.

More explicitly, $\frak{g}_D$ is rank $4$ over $\ZZ_p$ with (via the log map)
\[ \frak{g}_D \simeq  \ZZ_p \cdot \begin{pmatrix} 1 & 0 \\ 0 & 1 \end{pmatrix} \oplus \ZZ_p \cdot \begin{pmatrix} \sqrt{p} & 0 \\ 0 & -\sqrt{p} \end{pmatrix} \oplus \ZZ_p \cdot \begin{pmatrix} 0 & \iota \\ 1 & 0 \end{pmatrix} \oplus \ZZ_p \begin{pmatrix} 0 & \iota \sqrt{p} \\ -\sqrt{p} & 0 \end{pmatrix}. \]
Let $h := \sqrt{pI} = \begin{pmatrix} \sqrt{p} & 0 \\ 0 & -\sqrt{p} \end{pmatrix}$, $w := \begin{pmatrix} 0 & \iota \\ 1 & 0 \end{pmatrix}$, $v := \sqrt{p} w = \begin{pmatrix} 0 & \iota \sqrt{p} \\ -\sqrt{p} & 0 \end{pmatrix}$. 
The Lie brackets are given by \begin{eqnarray}\label{lie brackets} [h,w] = 2v, ~ [h,v] = 2p w, ~ [w,v] = -2\iota h. \end{eqnarray}
With the extension to $\cO = \ZZ_p(\sqrt{p})$,
\begin{eqnarray}\label{alt basis} \frak{g}_D \otimes_{\ZZ_p} \cO & \simeq &  \cO \cdot I \oplus \cO \cdot \sqrt{pI} \oplus \cO \cdot w \oplus \cO \cdot (\sqrt{p} e).
\end{eqnarray}
The integral Casimir operator $\Delta \in U(\frak{g}_D)$ is given by
\begin{eqnarray}\label{casimir} \Delta := \iota h^2 + pw^2 - v^2. \end{eqnarray}

The inclusion \ref{sub Lie sqp} induces a morphism of corresponding universal enveloping algebras
\begin{eqnarray}\label{sub env sqp} U(\frak{g}_D) \to U_{\ZZ_p(\sqrt{p})}(\frak{g}_D \otimes_{\ZZ_p} \ZZ_p(\sqrt{p})) \to U_{\ZZ_p(\sqrt{p})}(\frak{g}\frak{l}_2), \end{eqnarray} where the subscript $\ZZ_p(\sqrt{p})$ denotes that the universal enveloping algebras are over $\ZZ_p(\sqrt{p})$.

\section{Some finite length modules of the completed enveloping algebra of $\frak{g}_D$}
In this section, we use notation from the previous sections.

Pick the torus \[ \frak{t}_D := \ZZ_p \cdot I \oplus \ZZ_p \cdot h \subset \frak{g}_D, \] with a character \[ \lambda : \frak{t}_D \to \ZZ_p. \]
Let $K$ be a finite extension of $\QQ_p$ with ring of integers $R$, uniformizer $\varpi_K$ such that $\varpi_K^s = p$ for $s \in \ZZ^+$.
Let $Z(\frak{g}_D)$ be the center of $\UgD$, $\chi : Z(\frak{g}_D) \to R$ be an infinitesimal character.
Letting \[ \frak{g}_R = \frac{1}{p} \frak{g}_D \otimes_{\ZZ_p} R, ~ \frak{g}_K = \frak{g}_R \otimes_{R} K, \] we define
\[ \wUgR := \varprojlim_{a} \left(\frac{U(\mathfrak{g}_R)}{p^{a} U(\mathfrak{g}_R)}\right), ~ \widehat{U(\frak{g}_K)} := \wUgR\otimes_{R} K .\]
Let $I(\lambda, \chi)$ be the left ideal of $\wUgR$ (resp. $U(\frak{g}_R)$, $U(\frak{g}_K)$) generated by \[ \{ t-\lambda(t), ~  z-\chi(z) ~ | ~ t \in \frak{t}_D, ~ z \in Z(\frak{g}_D) \}. \]
We define the quotient modules
\[ W^\circ_{\lambda, \chi} :=  \wUgR / I(\lambda, \chi), ~ W_{\lambda, \chi} := W^\circ_{\lambda, \chi} \otimes_{R} K. \]

\begin{lem}\label{W basis}
For any $n \in \ZZ^+$,
$W^\circ_{\lambda, \chi} / \varpi^n W^\circ_{\lambda, \chi}$ has a basis $\{w^k, w^k v| k \in \mathbb{N}\}$ over $R / \varpi^n$.
\end{lem}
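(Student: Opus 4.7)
The plan is to establish the basis claim in two stages: first, that the set $\{w^k, w^k v \mid k \in \NN\}$ spans $W^\circ_{\lambda,\chi}/\varpi^n$ over $R/\varpi^n$, and second, that these elements are linearly independent. The two main inputs will be the Poincar\'e--Birkhoff--Witt theorem for $U(\frak{g}_R) \cong U(\frak{g}_D) \otimes_{\ZZ_p} R$ and the three defining relations of the left ideal $I(\lambda,\chi)$: the central relations from $I - \lambda(I)$ and $\Delta - \chi(\Delta)$, and the non-central relation from $h - \lambda(h)$.

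For spanning, I would choose the PBW ordering $(w, v, h, I)$ on $U(\frak{g}_R)$, so that any element is uniquely a sum of monomials $w^c v^d h^b I^a$. Since $I$ is central, the relation $I \equiv \lambda(I)$ (accounting for the $p$-scaling inherent in $\frak{g}_R = \frac{1}{p} \frak{g}_D \otimes R$) reduces $I^a$-factors to scalars. Placing $h$ rightmost lets me apply the left-ideal relation $h \cdot [1] = \lambda(h)[1]$ iteratively, so that $h^b \cdot [1]$ is also reduced to a scalar multiple of $[1]$. Finally, since $\Delta$ is central, the Casimir identity $\Delta = \iota h^2 + p w^2 - v^2$ becomes, after substituting $h$ and $\Delta$ by scalars, a quadratic relation of the form $v^2 \equiv p w^2 + c_0$ in $W^\circ_{\lambda,\chi}/\varpi^n$ for some constant $c_0 \in R/\varpi^n$. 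Peeling off one factor of $v^2$ at a time reduces every $v^d$ with $d \geq 2$ to lower $v$-degree, and by induction every element lies in the $R/\varpi^n$-span of $\{w^k, w^k v\}$.

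For linear independence, the cleanest approach is to pass to the associated graded with respect to the PBW filtration by total degree. The symbol of $\Delta - \chi(\Delta)$ (in degree $2$) is the quadratic $\iota h^2 + p w^2 - v^2$, while $I - \lambda(I)$ and $h - \lambda(h)$ contribute symbols in degree $1$. Modding these out of $\gr U(\frak{g}_R) \otimes R/\varpi^n$ yields a graded quotient isomorphic to $R/\varpi^n[\bar w, \bar v]/(\bar v^2 - p \bar w^2)$, whose obvious $R/\varpi^n$-basis is $\{\bar w^k, \bar w^k \bar v\}$. Lifting this graded basis back up gives linear independence of $\{w^k, w^k v\}$ in the filtered quotient $W^\circ_{\lambda,\chi}/\varpi^n$. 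Alternatively, one could construct an explicit $\wUgR$-module structure on the free $R/\varpi^n$-module on $\{w^k, w^k v\}$ with $v, w$ acting via the formulas dictated by the Casimir relation, and use a universal-property argument: the existence of such a module prevents collapsing.

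The main obstacle I anticipate is carefully tracking the $p$-factor twists arising from $\frak{g}_R = \frac{1}{p}\frak{g}_D \otimes R$: the images of $I$, $h$, and $\Delta$ in standard PBW coordinates of $U(\frak{g}_R)$ carry powers of $p$ (since $\frak{t}_D \subset p\,\frak{g}_R$ and $\Delta \in p^2 U(\frak{g}_R)$), so the scalar replacements happen only after multiplying by suitable $p$-powers, and one must verify mod $\varpi^n$ that the resulting expressions remain well-defined in $R/\varpi^n$ and truly land in the $R/\varpi^n$-span of $\{w^k, w^k v\}$. This $p$-bookkeeping is the single step where the argument is not purely formal PBW manipulation, and it is what controls the compatibility of the integral and $K$-rational pictures.
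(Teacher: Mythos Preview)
Your proposal is correct and follows essentially the same route as the paper: PBW in the ordering $w,v,h,I$ together with the relations $I\equiv\lambda(I)$, $h\equiv\lambda(h)$, and $v^2\equiv pw^2+(\iota\lambda(h)^2-\chi(\Delta))$ for spanning, while for linear independence the paper simply asserts ``one can verify'' where you offer the associated-graded or explicit-module argument. One small point: your associated-graded route implicitly uses that the symbols $\bar I,\bar h,\iota\bar h^2+p\bar w^2-\bar v^2$ form a regular sequence in $\gr U(\frak g_R)\otimes R/\varpi^n$ (so that $\gr$ of the quotient equals the quotient of $\gr$); this holds because $\bar v^2-p\bar w^2$ is monic in $\bar v$, hence a nonzerodivisor even over the non-reduced ring $R/\varpi^n$, but it is worth stating.
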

\begin{proof}
By the PBW theorem, every element can be written as a $R / \varpi^n$-linear combination of $w^k v^i h^j$ for $k, i, j \in \mathbb{N}$. 
All the monomial terms such that $j \geq 1$ are reduced to $j = 0$ terms since $h \in \frak{t}_D$.
All the monomial terms such that $j = 0, ~ i \geq 2$ are reduced to $j = 0, ~ i \leq 1$ terms as $v$ is quadratic in $\Delta = \iota h^2 + pw^2 - v^2 \in Z(\frak{g}_D)$.
The center $Z(\frak{g}_D)$ is a polynomial ring of $\Delta$ with coefficients in $\ZZ_p$.
One can verify that any non-zero $R / \varpi^n$-linear combination of $\{w^k, w^k v| k \in \mathbb{N}\}$ does not belong to $R / \varpi^n$-span of $U(\mathfrak{g}_R) (h-\lambda(h)) + U(\mathfrak{g}_R) (\Delta-\chi(\Delta))$.
\end{proof}
\begin{cor}\label{W basis seris}
  $W_{\lambda, \chi}$ is identified with
  \[ \left\{ (a_i, b_i)_{i \in \NN} \in \prod_{i=0}^\infty K \oplus \prod_{i=0}^\infty K \bigg| \|a_i\|_p \to 0, ~ \|b_i\|_p \to 0 \right\} \xrightarrow{\sim} W_{\lambda, \chi} \] by sending $\displaystyle (a_i, b_i)_{i \in \NN} \mapsto \sum_{i=0}^\infty a_i w^i + \sum_{i=0}^\infty b_i w^i v$.
\end{cor}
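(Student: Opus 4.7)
The plan is to promote the mod-$\varpi^n$ basis from Lemma \ref{W basis} to an integral description of $W^\circ_{\lambda,\chi}$ via a $\varpi$-adic inverse limit, then invert $\varpi$ to pass to $W_{\lambda,\chi}$. The map claimed in the statement is well-defined because both series converge $\varpi$-adically in the $\varpi$-adically complete module $W^\circ_{\lambda,\chi}$, so the task reduces to showing that every element admits a unique expansion of this form.

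Since $\wUgR$ is $\varpi$-adically complete by construction and the mod-$\varpi^n$ reductions are compatibly free by Lemma \ref{W basis} (any element of $\bigcap_n \varpi^n W^\circ_{\lambda,\chi}$ would have all of its basis coefficients in $\bigcap_n \varpi^n R = 0$, so the $\varpi$-adic topology on $W^\circ_{\lambda,\chi}$ is separated), the natural map $W^\circ_{\lambda,\chi} \xrightarrow{\sim} \varprojlim_n W^\circ_{\lambda,\chi}/\varpi^n W^\circ_{\lambda,\chi}$ is an isomorphism. Combined with Lemma \ref{W basis}, this identifies $W^\circ_{\lambda,\chi}$ with
\[ \varprojlim_n \left( \bigoplus_{k \in \NN} (R/\varpi^n) \cdot w^k \,\oplus\, \bigoplus_{k \in \NN} (R/\varpi^n) \cdot w^k v \right). \]
An element of this inverse limit is a coherent family assigning to each $k$ coefficients $a_k, b_k \in \varprojlim_n R/\varpi^n = R$; crucially, the finite-support condition at each level $n$ translates into $a_k, b_k \equiv 0 \pmod{\varpi^n}$ for all but finitely many $k$, i.e.\ $a_k, b_k \to 0$ $\varpi$-adically in $R$. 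This identifies $W^\circ_{\lambda,\chi}$ with the module of pairs of null sequences in $R$ via the displayed formula.

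Tensoring with $K$ over $R$ is then routine: every element of $c_0(R) \otimes_R K$ has the form $\varpi^{-m}(a_k)$ with $m \geq 0$ and $(a_k) \in c_0(R)$, which is automatically a null sequence in $K$, and conversely every null sequence in $K$ is bounded, so a suitable $\varpi$-power multiple of it lies in $c_0(R)$. This yields the stated identification of $W_{\lambda,\chi}$. I expect the only subtle point to be the manipulation of inverse limits of countable direct sums $(R/\varpi^n)^{(\NN)}$: it is the finite-support condition at each finite level (rather than direct-product behavior) that produces exactly the null-sequence condition in the limit; beyond this the argument is formal.
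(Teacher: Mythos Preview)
Your argument is correct and matches the paper's intent: the corollary is stated there without proof, as an immediate consequence of Lemma \ref{W basis} via $\varpi$-adic completion and then inverting $\varpi$, which is exactly what you do. One small point worth tightening is that $\varpi$-adic completeness of the quotient $W^\circ_{\lambda,\chi}$ does not follow from completeness of $\wUgR$ and separatedness alone; it holds here because $\wUgR$ is Noetherian, so its finitely generated (in fact cyclic) modules are automatically $\varpi$-adically complete.
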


We consider the two dimensional general linear $\ZZ_p(\sqrt{p})$-Lie algebra $\frak{g}\frak{l}_2(\ZZ_p(\sqrt{p}))$ over $\ZZ_p(\sqrt{p})$.
To ease some notation, we define $\frak{g}\frak{l}_{2,\sqrt{p}}$ to be
\[ \frak{g}\frak{l}_{2,\sqrt{p}} := \frak{g}\frak{l}_2(\ZZ_p(\sqrt{p})) \otimes_{\ZZ_p(\sqrt{p})} R. \]
Let \[ \frak{t}_2 := \ZZ_p(\sqrt{p}) \oplus \ZZ_p(\sqrt{p}) \subset \frak{g}\frak{l}_2(\ZZ_p(\sqrt{p})) \] be the diagonal torus. 
Let \[ \tilde{\lambda} : \frak{t}_2 \to \ZZ_p, \] \[ \tilde{\chi} : Z(\frak{g}\frak{l}_{2,\sqrt{p}}) \to R \] be characters whose restrictions on $\frak{t}_D$ and $Z(\frak{g}_D)$ coincide with $\lambda$ and $\chi$.
Similarly, let $I(\tilde{\lambda}, \tilde{\chi})$ be the left ideal of $\widehat{U(\frak{g}\frak{l}_{2,\sqrt{p}})}$
generated by \[ \{ t-\tilde{\lambda}(t), ~  z-\tilde{\chi}(z) ~ | ~ t \in \frak{t}_2, ~ z \in Z(\frak{g}\frak{l}_{2,\sqrt{p}}) \}. \]
We denote the quotient modeules
\[ W^\circ_{\tilde{\lambda}, \tilde{\chi}} :=  \widehat{U(\frak{g}\frak{l}_{2,\sqrt{p}})} / I(\tilde{\lambda}, \tilde{\chi}), ~ W_{\tilde{\lambda}, \tilde{\chi}} := W^\circ_{\tilde{\lambda}, \tilde{\chi}} \otimes_{R} K. \]

The morphism (\ref{sub env sqp}) extends to their completions
\begin{eqnarray}\label{sub env sqp comp}
  \wUgR \to \widehat{U(\frak{g}\frak{l}_{2,\sqrt{p}})}.
\end{eqnarray}

\begin{prop}
  There is an embedding \[ W_{\lambda, \chi} \hookrightarrow W_{\tilde{\lambda}, \tilde{\chi}} \] as $\wUgR$-modules induced by morphism (\ref{sub env sqp comp}).
\end{prop}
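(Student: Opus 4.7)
The plan is to verify well-definedness and then prove injectivity via an explicit leading-term computation on finite-dimensional filtration pieces.

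\textbf{Well-definedness.} I would first check that (\ref{sub env sqp comp}) carries each generator of $I(\lambda,\chi)$ into $I(\tilde\lambda,\tilde\chi)$. For $t \in \frak{t}_D$ this is immediate from $\frak{t}_D \subset \frak{t}_2$ and $\tilde\lambda|_{\frak{t}_D} = \lambda$. For the central generator $\Delta = \iota h^2 + pw^2 - v^2 \in Z(\frak{g}_D)$, substituting $h \mapsto \sqrt{p}\tilde h$, $w \mapsto \iota e + f$, $v \mapsto \sqrt{p}(\iota e - f)$ yields
\[ \Delta \mapsto p\iota\tilde h^2 + 2p\iota(ef+fe) = 2p\iota\tilde C, \]
where $\tilde C = ef + fe + \tilde h^2/2$ is the $\frak{sl}_2$-Casimir. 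The compatibility $\tilde\chi|_{Z(\frak{g}_D)} = \chi$ then forces $\chi(\Delta) = 2p\iota\tilde\chi(\tilde C)$, so $\Delta - \chi(\Delta) \mapsto 2p\iota(\tilde C - \tilde\chi(\tilde C)) \in I(\tilde\lambda,\tilde\chi)$. This induces the $\wUgR$-module map $T \colon W_{\lambda,\chi} \to W_{\tilde\lambda,\tilde\chi}$.

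\textbf{Leading-term computation.} Next I would establish the analog of Lemma \ref{W basis}/Corollary \ref{W basis seris} for $W_{\tilde\lambda,\tilde\chi}$: after fixing $\tilde h, I$ to scalars and imposing the Casimir relation, the products $ef$ and $fe$ reduce to explicit scalars $c_1, c_2$, and any PBW monomial collapses to a power of $e$ or $f$; this gives the Banach basis $\{e^k : k \geq 0\} \cup \{f^k : k \geq 1\}$. Induction on $k$ using $ef = c_1$, $fe = c_2$ then yields
\[ w^k \equiv \iota^k e^k + f^k \quad\text{and}\quad w^{k-1}v \equiv \sqrt{p}\iota^k e^k - \sqrt{p} f^k, \]
both modulo the $K$-span of $\{e^j, f^j : j < k\}$.

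\textbf{Triangular injection.} Define $F_N \subset W_{\lambda,\chi}$ as the $K$-span of $\{w^k : 0 \leq k \leq N\} \cup \{w^{k-1}v : 1 \leq k \leq N\}$, and $\tilde F_N \subset W_{\tilde\lambda,\tilde\chi}$ as the $K$-span of $\{e^k : 0 \leq k \leq N\} \cup \{f^k : 1 \leq k \leq N\}$; both have $K$-dimension $2N+1$. The map $T$ restricts to $F_N \to \tilde F_N$, and in ordered bases by increasing degree (with $w^k$ before $w^{k-1}v$ and $e^k$ before $f^k$), $T|_{F_N}$ has a lower-triangular matrix whose $2 \times 2$ diagonal blocks are
\[ \begin{pmatrix} \iota^k & \sqrt{p}\iota^k \\ 1 & -\sqrt{p} \end{pmatrix}, \]
with determinant $-2\sqrt{p}\iota^k \neq 0$ (using $p \geq 5$, $\iota \in \ZZ_p^\times$, and $p \neq 2$). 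So each $T|_{F_N}$ is an isomorphism of finite-dimensional $K$-vector spaces, giving injectivity of $T$ on the dense polynomial subspace $\bigcup_N F_N$.

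The main obstacle is extending this algebraic injectivity to the full Banach completion. The approach relies on the integrality $|c_1|, |c_2| \leq 1$, which follows from $\mu_h = \tilde\lambda(\tilde h)$ and $\mu_C = \tilde\chi(\tilde C)$ lying in $R$. An inductive norm estimate on the recurrence $\gamma_{k+1, j} = \iota\gamma_{k, j-1} + c_2\gamma_{k, j+1}$ governing the expansion coefficients of $w^k$ in the $\{e^j, f^j\}$-basis shows $|\gamma_{k, j}| \leq 1$, so $T$ is bounded with operator norm $\leq 1$. Combined with the uniform bound of $p^{1/2}$ on the inverses of the $2 \times 2$ diagonal blocks and the ultrametric identity $\max(|A+B|, |A-B|) = \max(|A|, |B|)$ applied to the leading-degree $e^N, f^N$ coefficients, one derives $\|T(x)\| \geq p^{-1/2}\|x\|$ for all $x$, so $T$ is bounded below and hence injective on all of $W_{\lambda,\chi}$.
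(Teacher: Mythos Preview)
Your well-definedness check and the leading-term computation are correct, and the block-triangular injectivity on each $F_N$ is fine. The difference from the paper is the choice of basis on the target. The paper observes that under the inclusion one simply has $w \mapsto w_0$ and $v \mapsto \sqrt{p}\,v_0$, where $h_0, w_0, v_0, \Delta_0$ satisfy the same formal relations as $h, w, v, \Delta$; hence $\{w_0^k,\, w_0^k v_0\}$ is a Banach basis of $W_{\tilde\lambda,\tilde\chi}$ exactly parallel to $\{w^k,\, w^k v\}$ on the source, and $T$ is \emph{diagonal}: $\sum a_i w^i + \sum b_i w^i v \mapsto \sum a_i w_0^i + \sum \sqrt{p}\,b_i w_0^i v_0$. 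Injectivity (and your bound $\|T(x)\|\geq p^{-1/2}\|x\|$) is then immediate by reading off coefficients. Your choice of the eigenbasis $\{e^k, f^k\}$ turns $T$ into a merely triangular map and manufactures the ``main obstacle'' you then have to overcome.

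That obstacle is where your argument has a genuine gap. For a general $x = \sum_k a_k w^k + \sum_k b_k w^{k-1}v$, the degree-$N$ coefficients $\alpha_N, \beta_N$ of $T(x)$ receive contributions not only from $(a_N,b_N)$ but from every $(a_k,b_k)$ with $k>N$, via the lower-order terms of $(\iota e+f)^k$ and $\sqrt{p}(\iota e+f)^{k-1}(\iota e - f)$. Those lower-order terms have coefficients of norm $\leq 1$, while the diagonal contribution you isolate may have norm only $p^{-1/2}$; nothing in your sketch rules out cancellation between them. The ultrametric identity controls the $2\times 2$ diagonal block in isolation, not its interaction with the strictly lower part, and since $\|D^{-1}L\|$ is only bounded by $p^{1/2}$ (not $<1$) a Neumann-series argument does not close either. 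The inequality $\|T(x)\|\geq p^{-1/2}\|x\|$ is true, but proving it in your basis amounts to showing that the change of basis $\{w_0^k, w_0^k v_0\}\leftrightarrow\{e^k, f^k\}$ is $R$-unimodular --- at which point you have essentially rediscovered the paper's diagonal description.
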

\begin{proof}
Let $h_0 := \begin{pmatrix} 1 & 0 \\ 0 & -1 \end{pmatrix}$, $w_0 := \begin{pmatrix} 0 & \iota \\ 1 & 0 \end{pmatrix}$, $v_0 := \begin{pmatrix} 0 & \iota \\ -1 & 0 \end{pmatrix}$.
We have \[ \frak{g}\frak{l}_2(\ZZ_p(\sqrt{p})) \cong \ZZ_p(\sqrt{p}) \cdot I \oplus \ZZ_p(\sqrt{p}) \cdot h_0 \oplus \ZZ_p(\sqrt{p}) \cdot w_0 \oplus \ZZ_p(\sqrt{p}) \cdot v_0. \]
The element \[ \Delta_0 := \iota h_0^2 + w_0^2 - v_0^2 \] is in the center of $\frak{g}\frak{l}_2(\ZZ_p(\sqrt{p}))$. 
Similar to Lem \ref{W basis seris}, $W^\circ_{\tilde{\lambda}, \tilde{\chi}} / \varpi^n W^\circ_{\tilde{\lambda}, \tilde{\chi}}$ has a basis $\{w_0^k, w_0^k v_0| k \in \mathbb{N}\}$ over $R / \varpi^n$ and $W_{\tilde{\lambda}, \tilde{\chi}}$ can be identified with
\[ \left\{ (a_i, b_i)_{i \in \NN} \in \prod_{i=0}^\infty K \oplus \prod_{i=0}^\infty K \bigg| \|a_i\|_p \to 0, ~ \|b_i\|_p \to 0 \right\} \xrightarrow{\sim} W_{\lambda, \chi} \] by sending $\displaystyle (a_i, b_i)_{i \in \NN} \mapsto \sum_{i=0}^\infty a_i w_0^i + \sum_{i=0}^\infty b_i w_0^i v_0$.
The map $W_{\lambda, \chi} \hookrightarrow W_{\tilde{\lambda}, \tilde{\chi}}$ can be concretely described as 
\[ \sum_{i=0}^\infty a_i w^i + \sum_{i=0}^\infty b_i w^i v \mapsto \sum_{i=0}^\infty a_i w_0^i + \sum_{i=0}^\infty \sqrt{p} b_i w_0^i v_0, \] which is obviously injective.
\end{proof}

\begin{cor}\label{uniqueness ev}
  An eigenspace of $h$ on $W_{\lambda, \chi}$ of any eigenvalue has dimension 1.
\end{cor}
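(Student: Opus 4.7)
Via the $\widehat{U(\mathfrak{g}_R)}$-linear embedding $W_{\lambda, \chi} \hookrightarrow W_{\tilde{\lambda}, \tilde{\chi}}$ from the preceding proposition, which intertwines the action of $h \in \mathfrak{g}_D$, it suffices to show that every $h$-eigenspace in $W_{\tilde{\lambda}, \tilde{\chi}}$ is at most one-dimensional. Since $h = \sqrt{p}\,h_0$ inside $\mathfrak{g}\mathfrak{l}_{2,\sqrt{p}}$, this further reduces to the corresponding statement for the $h_0$-action on $W_{\tilde{\lambda}, \tilde{\chi}}$.

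I would analyze the $h_0$-action via the standard $\mathfrak{sl}_2$-triple $(e, h_0, f)$ inside $\mathfrak{g}\mathfrak{l}_{2,\sqrt{p}}$, where $e_0 := w_0 + v_0 = 2\iota\,e$ and $f_0 := w_0 - v_0 = 2f$ are $h_0$-weight vectors of weights $+2$ and $-2$. Since $\overline{1}$ is an $h_0$-eigenvector with eigenvalue $\tilde{\lambda}(h_0)$, the family $\{e_0^k\,\overline{1}\}_{k \geq 0} \cup \{f_0^k\,\overline{1}\}_{k \geq 1}$ consists of $h_0$-eigenvectors with pairwise distinct eigenvalues $\tilde{\lambda}(h_0)+2k$ and $\tilde{\lambda}(h_0)-2k$, respectively.

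The key step is to show that this family is a Schauder basis for the $K$-Banach space $W_{\tilde{\lambda}, \tilde{\chi}}$. I would do this by translating between the PBW topological basis $\{w_0^k\,\overline{1},\, w_0^k v_0\,\overline{1}\}$ from Lem \ref{W basis} and Cor \ref{W basis seris} and the weight basis, via the substitutions $w_0 = (e_0+f_0)/2$ and $v_0 = (e_0-f_0)/2$, together with the $\mathfrak{sl}_2$-commutation relations and the scalar action of the Casimir $\Delta_0 \equiv \tilde{\chi}(\Delta_0)$ used to reduce products of $e_0$ and $f_0$ back into the algebraic span. The resulting change-of-basis coefficients are polynomial in $\iota$, $\tilde{\lambda}(h_0)$, $\tilde{\chi}(\Delta_0)$, and I would verify that their $p$-adic norms are uniformly bounded in both directions, which upgrades the algebraic change of basis to a topological isomorphism of $K$-Banach spaces.

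Once the weight vectors form a Schauder basis, any element $x \in W_{\tilde{\lambda}, \tilde{\chi}}$ admits a unique convergent expansion in them; the equation $h_0 x = \mu x$ then forces all but one coefficient to vanish, yielding the desired one-dimensionality. The main obstacle is the explicit change-of-basis computation, in which iterated application of the Casimir identity introduces polynomial expressions whose $p$-adic valuations must be controlled carefully.
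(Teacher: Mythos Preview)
Your proposal is correct and follows the same overall route as the paper: pass to $W_{\tilde\lambda,\tilde\chi}$, exhibit the $h_0$-weight vectors $e_0^k\bar 1$, $f_0^k\bar 1$ with pairwise distinct eigenvalues, and argue that they form a Schauder basis. The one substantive difference is how the Schauder-basis step is handled. You propose to bound the $p$-adic norms of the change-of-basis coefficients directly over $K$ and flag this as the main obstacle. The paper instead works with the integral lattice $W^\circ_{\tilde\lambda,\tilde\chi}$: since $2,\iota\in R^\times$, the elements $e_0,f_0$ already lie in the $R$-form $\mathfrak{gl}_{2,\sqrt p}$, and one checks that $\{1,e_0^k,f_0^k\}$ is an $R$-basis of the \emph{uncompleted} quotient $U(\mathfrak{gl}_{2,\sqrt p})/I(\tilde\lambda,\tilde\chi)$ (the $R$-span is a $U$-submodule containing $\bar 1$, and linear independence is immediate from the distinct weights). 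Any $R$-basis of a free $R$-module remains a topological basis of its $\varpi$-adic completion, so the boundedness of the change of basis is automatic and no explicit valuation estimates are needed. Your approach would also succeed, but the integral shortcut dissolves the obstacle you anticipated.
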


\begin{proof}
  Consider
  \[ e_0 := \begin{pmatrix} 0 & 1 \\ 0 & 0 \end{pmatrix} = \frac{1}{2\iota} (w_0 + v_0), \]
  \[ f_0 := \begin{pmatrix} 0 & 0 \\ 1 & 0 \end{pmatrix} = \frac{1}{2} (w_0 - v_0). \]
  The module $W^\circ_{\tilde{\lambda}, \tilde{\chi}}$ has another basis $\{1, e_0^k, f_0^k| k \in \mathbb{N}\}$ which is an eigenbasis for $h$ such that
  \[ h \cdot e_0^k = \lambda(h) + 2k, h \cdot f_0^k = \lambda(h) - 2k. \]
  Any eigenspace of $h$ on $W^\circ_{\tilde{\lambda}, \tilde{\chi}}$ has dimension 1. The same holds for its $h$ invariant subspace $W_{\lambda, \chi}$.
\end{proof}

\begin{definition}\label{infinitesimal genericity}
We define $\lambda$ as \emph{generic} if $\lambda$ does not appear in any absolutely irreducible finite-dimensional representation of $\frak{g}_D$.
\end{definition}

Suppose $\frak{g}$ is the $\ZZ_p$-Lie algebra associated to a uniform pro-$p$ group $G$ via (\ref{zp Lie}). 
The Lie algebra associated to $G^{p^n}$ is naturally identified with \[ p^n \frak{g} \subset \frak{g} \] for each natural number $n \geq 0$.
Let \[ \wUgnK := \widehat{U(p^n\frak{g})_K} \] be the $K$ coefficient completed universal enveloping algebra associated to $p^n \frak{g}$.
There is a natural inclusion \begin{eqnarray} \wUgnK \hookrightarrow \widehat{U(\frak{g})_{n', K}} \end{eqnarray} whenever $n \geq n'$, with the faster coefficient convergent rate for the former subring.
In particular, we have \begin{eqnarray}\label{env alg inc} \wUgnK \hookrightarrow \wUgK. \end{eqnarray}

Consider the adjoint action $G \to \GL(\frak{g}_K)$.
Let $T \leq G$ be the stabilizer subgroup of $\frak{t}$.
We further suppose the $\ZZ_p$-Lie algebra associated to $T$ via (\ref{zp Lie}) is a $\ZZ_p$-lattice in $\frak{t}$.
Given an element $g \in G$, let \begin{eqnarray*} g \circ \lambda : g^{-1}\frak{t}g & \to & K^\times \\ t & \mapsto & \lambda(gtg^{-1}) \end{eqnarray*} be the conjugation of $\lambda$ by $g$. 
For notation, similarly let $I(g \circ \lambda, \chi)$ be the left ideal of $\wUgnK$ generated by \[ \{ t-g \circ \lambda(t), ~  z-\chi(z) ~ | ~ t \in p^n g^{-1}\frak{t}g, ~ z \in Z(p^n \frak{g}) \}, ~ \mathrm{and ~ let} \] 
\[ W^{n, g}_{\lambda, \chi} := \wUgnK / I(g \circ \lambda, \chi) \] be a weight module of $\wUgnK$ with respect to $g^{-1} \frak{t} g$ as well as its natural integral model $W^{n, g, \circ}_{\lambda, \chi}$.

\begin{rem}\label{power convergence rate}
  Similar to Cor \ref{W basis seris}, $W^{n,\mathrm{id}}_{\lambda, \chi}$ is identified with
  \begin{eqnarray*} \left\{ (a_i, b_i)_{i \in \NN} \in \prod_{i=0}^\infty K \oplus \prod_{i=0}^\infty K \bigg| p^{ni} \cdot \|a_i\|_p \to 0, ~ p^{ni} \cdot \|b_i\|_p \to 0 \right\} \xrightarrow{\sim} W^{n,\mathrm{id}}_{\lambda, \chi} \end{eqnarray*}
  sending $\displaystyle (a_i, b_i)_{i \in \NN} \mapsto \sum_{i=0}^\infty a_i w^i + \sum_{i=0}^\infty b_i w^i v$.

  The natural inclusion (\ref{env alg inc}) therefore induces injections \begin{eqnarray}\label{inclusion of Ws} W^{n,\mathrm{id}}_{\lambda, \chi} \hookrightarrow W_{\lambda, \chi}, ~~~  W^{n,\mathrm{id}, \circ}_{\lambda, \chi} \hookrightarrow W^\circ_{\lambda, \chi}. \end{eqnarray}

  For a general $g \in G$, we can embed $W^{n, g}_{\lambda, \chi}$ into $W_{\lambda, \chi}$ via
  \begin{eqnarray}\label{g inclusion of Ws}
    W^{n, g}_{\lambda, \chi} & \hookrightarrow & W_{\lambda, \chi} \\
    w & \mapsto & w \cdot g^{-1}. \nonumber
  \end{eqnarray}
\end{rem}

For the rest of the section, we take $G := U^2_D$ in \S \ref{padic quaternion}.
Suppose $\sqrt{p} \in K$.
Recall the weight decomposition (\ref{integral wd}) along with the $\ZZ_p(\sqrt{p})$ (therefore $\cO_K$) basis (\ref{alt basis}).

We need an elementary lemma for our next key proposition.
\begin{lem}\label{factorial pval}
  For $N > 0, n \geq 0$ and $0 < a < p$ such that $N + an \leq p^k$, we have 
  \[ v_p(\prod_{i=0}^{i=n} (N+ia)) \leq k + \frac{n}{p-1}. \]
\end{lem}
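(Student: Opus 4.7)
The plan is to apply the standard identity
\[
v_p\!\left(\prod_{i=0}^{n}(N+ia)\right) = \sum_{m\ge 1} \#\{\,i \in \{0,\ldots,n\} : p^m \mid N+ia\,\}
\]
and estimate each inner count using the arithmetic progression structure, then truncate the outer sum using the hypothesis $N+an \leq p^k$.

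For the inner count, I would exploit the assumption $0 < a < p$, which gives $\gcd(a,p) = 1$, hence $a$ is a unit modulo $p^m$ for every $m \geq 1$. Therefore the congruence $N + ia \equiv 0 \pmod{p^m}$ has a unique solution $i \equiv -a^{-1}N \pmod{p^m}$, and in any stretch of $p^m$ consecutive integers $i$ exactly one satisfies $p^m \mid N+ia$. It follows that
\[
\#\{\,i \in \{0,1,\ldots,n\} : p^m \mid N+ia\,\} \;\leq\; \left\lfloor \frac{n}{p^m} \right\rfloor + 1.
\]

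For the truncation, I would observe that every term satisfies $0 < N+ia \leq N+an \leq p^k$, so $v_p(N+ia) \leq k$ for each $i$, and in particular the inner count is zero for $m > k$. Combining the two observations,
\[
v_p\!\left(\prod_{i=0}^{n}(N+ia)\right) \;\leq\; \sum_{m=1}^{k} \left(\left\lfloor \frac{n}{p^m} \right\rfloor + 1\right) \;\leq\; k \;+\; \sum_{m=1}^{\infty} \frac{n}{p^m} \;=\; k + \frac{n}{p-1},
\]
as desired.

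This is a purely elementary $p$-adic estimate, and I do not anticipate a real obstacle. The only subtle point worth flagging is that the truncation at $m = k$ is precisely where the hypothesis $N + an \leq p^k$ is used; without it, the tail $\sum_{m > k}\lfloor n/p^m\rfloor$ is still bounded by $\tfrac{n}{p-1}$ but one would not get the $+k$ sharpness that the lemma requires.
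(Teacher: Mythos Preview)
Your proof is correct. It differs from the paper's argument in its decomposition: the paper isolates a single index $n_0$ where $v_p(N+n_0 a)$ is maximal, bounds that term by $k$ (from $0<N+n_0 a\le p^k$), and then observes that for every other $i$ one has $v_p(N+ia)\le v_p\big((i-n_0)a\big)=v_p(i-n_0)$, so the remaining sum is controlled by $v_p(n_0!)+v_p((n-n_0)!)\le \frac{n}{p-1}$. You instead slice horizontally via Legendre's identity, bounding the level-$m$ count by $\lfloor n/p^m\rfloor+1$ and truncating at $m=k$. Both routes are equally elementary and use the hypotheses in the same places (coprimality of $a$ to $p$, and $N+an\le p^k$ to cap the maximal valuation at $k$); your version is arguably a bit more systematic, while the paper's ``remove the maximum'' trick makes the role of the single large term explicit. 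One small quibble: in your closing remark, it is really the accumulation of the $+1$'s over infinitely many $m$, not the tail $\sum_{m>k}\lfloor n/p^m\rfloor$, that would blow up without the truncation.
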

\begin{proof}
Suppose $v_p(N+n_0 a) = \max_{0 \leq i \leq n} (N+ia)$, we can separately bound
\[ v_p(\prod_{i=0}^{i<n_0} (N+ia)) \leq \frac{1}{p-1} \cdot |\{0 \leq i < n_0\}| \]
\[ v_p(\prod_{i=n_0+1}^{i=n} (N+ia)) \leq \frac{1}{p-1} \cdot |\{n_0 < i \leq n\}|, \]
putting them together, we have
\[ v_p(\prod_{i=0}^{i=n} (N+ia)) \leq v_p(N+n_0 a) + \frac{1}{p-1} \cdot (|\{0 \leq i < n_0\}| + |\{n_0 < i \leq n\}|) \leq k + \frac{n}{p-1}. \]
\end{proof}

\begin{prop}\label{g not in id}
 If $g \notin T \cdot G^{p^n}$, the image of $g$ in $W_{\lambda, \chi}$ does not lie in $W^{n,\mathrm{id}}_{\lambda, \chi}$.
\end{prop}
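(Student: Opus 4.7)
Set $X := \log g\in\frak{g}_D$ and decompose $X = \alpha I + \beta h + \gamma w + \delta v$ in the $\ZZ_p$-basis of $\frak{g}_D$. A Baker--Campbell--Hausdorff argument, using $[\frak{t}_D, p^n\frak{g}_D]\subset p^n\frak{g}_D$ and the normality of $G^{p^n}$ in $G$, identifies $\log(T\cdot G^{p^n}) = \frak{t}_D + p^n\frak{g}_D$. Hence the hypothesis $g\notin T\cdot G^{p^n}$ translates into the arithmetic condition $\min(v_p(\gamma), v_p(\delta)) < n$. Since $\exp(\alpha I)$ acts on $W_{\lambda,\chi}$ as a nonzero scalar, I reduce to analyzing $\exp(Y)\cdot 1$ for $Y := \beta h + \gamma w + \delta v$.

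To obtain a clean weight basis, I pass to the $\frak{g}\frak{l}_{2,\sqrt{p}}$-extension via the embedding $W_{\lambda,\chi}\hookrightarrow W_{\tilde\lambda,\tilde\chi}$ and the $h_0$-eigenbasis $\{1, e_0^k, f_0^k\}_{k\geq 1}$ of Cor \ref{uniqueness ev}. Using $h = \sqrt{p}\,h_0$, $w = \iota e_0 + f_0$, $v = \sqrt{p}(\iota e_0 - f_0)$,
\[ Y = \beta\sqrt{p}\,h_0 + E\,e_0 + F\,f_0, \qquad E := \iota(\gamma+\sqrt{p}\delta),\ \ F := \gamma-\sqrt{p}\delta. \]
Because $v_p(\gamma)\in\ZZ$ while $v_p(\sqrt{p}\delta)\in\ZZ+\tfrac{1}{2}$, no cancellation can occur between the two summands, so $v_p(E) = v_p(F) = \min(v_p(\gamma), v_p(\delta)+\tfrac{1}{2}) =: m < n$.

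The main step is to estimate the coefficient $A_k$ of $e_0^k$ in $\exp(Y)\cdot 1$. Expanding $\exp(Y)\cdot 1 = \sum_m Y^m\cdot 1/m!$ and tracking the $\frak{s}\frak{l}_2$-action on $1$, $A_k$ arises as a weighted sum over directed walks $0\to k$ on $\ZZ_{\geq 0}$: each $e_0$-step contributes $E$, each $h_0$-step at level $j$ contributes $\beta\sqrt{p}(\lambda(h_0) + 2j)$, and each $f_0$-step at level $j$ contributes $F\cdot(c - j\lambda(h_0) - j(j-1))$, where $c = e_0 f_0\cdot 1$ is the scalar coming from the fixed Casimir character $\tilde\chi(\Delta)$. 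The shortest walk (pure $e_0$-steps, length $k$) yields the leading $E^k/k!$. Longer walks generate products of affine shifts in $\lambda(h_0)$ along arithmetic progressions, whose $p$-adic valuations are bounded above by $k/(p-1)+O(\log k)$ thanks to Lemma \ref{factorial pval}. Balancing these bounds against the powers $E^{k+r}F^r$ and the factorial denominators $(k+s+2r)!$ shows the corrections cannot beat the leading $E^k/k!$ in $p$-adic norm, producing $v_p(A_k)\leq km - v_p(k!) + O(\log k)$. Since $m<n$ and $v_p(k!)\geq 0$, this gives $v_p(A_k) - nk\to -\infty$, violating the convergence criterion of Remark \ref{power convergence rate} for $W^{n,\mathrm{id}}_{\tilde\lambda,\tilde\chi}$; via the coefficient-wise compatible embedding $W^{n,\mathrm{id}}_{\lambda,\chi}\hookrightarrow W^{n,\mathrm{id}}_{\tilde\lambda,\tilde\chi}$ (the only discrepancy is a harmless factor of $\sqrt{p}$ on the $w^i v$-component), we conclude $\exp(X)\cdot 1\notin W^{n,\mathrm{id}}_{\lambda,\chi}$. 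The main obstacle is precisely ruling out unexpected cancellations among higher-order walk contributions that might otherwise force $A_k$ to be unexpectedly small; Lemma \ref{factorial pval} is tailor-made to supply the sharp upper bound on arithmetic-progression products needed to prevent this, and organizing the combinatorics of walks so that the bound cleanly yields the estimate above is the key technical step.
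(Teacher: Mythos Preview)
Your route differs from the paper's in two structural ways, and those differences are exactly where your argument becomes hard to close.

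First, the paper does not write $g=\exp X$ for a single $X$; it uses the ordered product $g=\exp(c_w w)\exp(c_v v)\exp(c_h h)$ and works entirely in the $\{w^k,w^kv\}$-basis of $W_{\lambda,\chi}$ (no passage to $W_{\tilde\lambda,\tilde\chi}$ is needed here). This separates the problem: the factor $\exp(c_w w)$ is already diagonal in that basis, and $\exp(c_v v)$ can be unwound using the single Casimir relation $v^2=pw^2+(\iota h^2-\chi(\Delta))$. Your $\exp(E e_0+F f_0+\beta\sqrt p\,h_0)$ mixes all three directions at once and forces a genuine walk-sum in which every term interacts with every other.

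Second, the paper does not try to bound the degree-$k$ coefficient for all $k$; it evaluates it \emph{exactly} at the special degrees $p^k+1$, exploiting $v_p\binom{p^k+1}{2i}\geq k$ for $0<2i<p^k+1$ to isolate a single dominant summand. This, together with a case split on whether $n_v<n_w$ or $n_v\geq n_w$, is what makes the comparison clean, and is precisely where Lemma~\ref{factorial pval} enters: to bound the \emph{specific} arithmetic-progression product $(2i+1)(2i+3)\cdots(2N-1)$ that the binomial/Casimir expansion produces.

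The gap in your proposal is the sentence ``Balancing these bounds\ldots shows the corrections cannot beat the leading $E^k/k!$.'' As written this is an assertion, not an estimate, and the tools you cite do not obviously deliver it. Two concrete obstacles: (i) your $f_0$-step at level $j$ contributes $c-j\lambda(h_0)-j(j-1)$, which is \emph{quadratic} in $j$, while Lemma~\ref{factorial pval} controls products along an \emph{arithmetic progression}; it gives no bound on a product of quadratic values over an arbitrary multiset of levels visited by a walk. (ii) Walks from $0$ to $k$ are not confined to $\ZZ_{\geq0}$ in $W_{\tilde\lambda,\tilde\chi}$, since $f_0\cdot 1=f_0^1\neq 0$; excursions into negative levels contribute further terms, and you also need to justify that the convergence criterion of Remark~\ref{power convergence rate}, stated in the $\{w_0^k,w_0^kv_0\}$-basis, transfers to your $\{e_0^k,f_0^k\}$-basis. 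None of these is necessarily fatal, but each requires an argument you have not supplied, and I do not see how to supply them without in effect importing the paper's two devices: the product factorization of $g$ and the evaluation at the sparse sequence of degrees $p^k+1$.
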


\begin{proof}
By \cite[Thm 4.9]{DDSMS99} together with Lem \ref{log iso}, we express $g$ as 
  \[ g = \exp(c_w \cdot w) \cdot \exp(c_v \cdot v) \cdot \exp(c_h \cdot h), \]
for $c_w, c_v, c_h \in p \cdot \ZZ_p$. As the image of $\exp(c_h \cdot h)$ equals to a constant, it suffices to consider the  $c_h = 0$ case.

For the generic case $c_w \cdot c_v \neq 0$, we write \[ c_w = p^{n_w} c_w^\ast, ~ c_v = p^{n_v} c_v^\ast ~~ \for \] \[ n_w \geq 1, ~ n_v \geq 1, ~ c_w^\ast, c_v^\ast \in \ZZ_p^\ast. \]
By \cite[Lem 2.5]{DDSMS99} and \cite[Cor 2.8]{DDSMS99}, the condition $g \notin T \cdot G^{p^n} = G^{p^n} \cdot T$ is equivalent to either $n_w$ or $n_v$ is at most $n$.

We further decompose $g = g_w \cdot g_v$ for \[ g_w := \exp(c_w \cdot w) = \sum_{i=0}^\infty \frac{1}{i!} c_w^i w^i \] 
\[ g_v := \exp(c_v \cdot v) = \sum_{j=0}^\infty \frac{1}{j!} c_v^j v^j \]

We express their images in $W_{\lambda, \chi}$ by the following convergent power series (Lemma \ref{W basis})
\[ g_w = \sum_{m=0}^\infty c_{g_w, m} w^m, ~~ g_v = \sum_{m=0}^\infty c_{0, g_v, m} w^m + \sum_{m=0}^\infty c_{1, g_v, m} w^m v. \]

Since $p$-adic valuation of $v_p(m!) \sim \frac{m}{p-1}$ as $m \to \infty$, we have \[v_p(c_{g_w, m}) \sim m(n_w - \frac{1}{p-1}).\]

With the casimir operator relation \ref{casimir}, $v^2 = pw^2 + \imath h^2 - \chi(\Delta)$. To ease notation, let $x := \imath h^2 - \chi(\Delta)$. We unfold $g_v$ as
\[ g_v = \sum_{j=0}^\infty \frac{1}{(2j)!} c_v^{2j} (pw^2 +x)^j + \sum_{j=0}^\infty \frac{1}{(2j+1)!} c_v^{2j+1} (pw^2 +x)^j v. \]

By Lie brackets (\ref{lie brackets}), we can express the second part of the sum as
\[ \sum_{j=0}^\infty \frac{1}{(2j+1)!} c_v^{2j+1} (pw^2 +x)^j v = \sum_{m=0}^\infty c_{0, g_v, 2m+1} w^{2m+1} + \sum_{m=0}^\infty c_{1, g_v, 2m} w^{2m} v. \]

Therefore, the first part of the sum only consists of monomials of even degrees
\[ \sum_{j=0}^\infty \frac{1}{(2j)!} c_v^{2j} (pw^2 +x)^j = \sum_{m=0}^\infty c_{0, g_v, 2m} w^{2m}. \]

For this part, we can regard $x$ as a scaler in $\cO_K$. 

Particularly when $m = \frac{p^k + 1}{2}$ for $k \geq 1$, 
\begin{eqnarray}\label{k power} v_p(c_{0, g_v, p^k + 1}) = v_p\big(\frac{1}{(p^k)!} c_v^{p^k + 1} p^{\frac{p^k + 1}{2}}\big). \end{eqnarray}

For general $m$, by the binomial theorem, we have
\begin{eqnarray}\label{min pval} v_p(c_{0, g_v, 2m}) \geq \min_{N \geq m} v_p\big(\frac{1}{(2N)!} c_v^{2N} p^m \binom{N}{m}\big). \end{eqnarray}

Therefore, we have a uniform lower bound
\[ v_p(c_{0, g_v, m}) \geq O((n_v+\frac{1}{2}-\frac{1}{p-1})m), ~~ v_p(c_{1, g_v, m}) \geq O((n_v+\frac{1}{2}-\frac{1}{p-1})m). \]

We expand $g = g_w \cdot g_v$ as 
\[ g = \sum_{m=0}^\infty c_{0, g, m} w^m + \sum_{n=0}^\infty c_{1, g, m} w^m v, ~~ \mathrm{where}\]
\[ c_{l, g, m} = \sum_{i=0}^m c_{g_w, m-i} c_{l, g_v, i}, ~~\for ~~ l = 0, 1, \]
and discuss two cases:
\begin{itemize}
  \item If $n_v < n_w$, we claim 
  \[ v_p(c_{0,g,p^k+1}) = v_p\big(\frac{1}{(p^k)!} c_v^{p^k + 1} p^{\frac{p^k + 1}{2}}\big) \sim (p^k+1)(n_v + \frac{1}{2} - \frac{1}{p-1}), \] for any arbitrarily large $k$.

  For $0 < i < \frac{p^k + 1}{2}$, we have
  \begin{eqnarray}\label{comb pk}
    v_p(\binom{p^k+1}{2i}) \geq k.
  \end{eqnarray}
  We can therefore bound for $0 < i < \frac{p^k + 1}{2}$
  \begin{eqnarray*} v_p(c_{g_w,p^k+1-2i} c_{0,g_v, 2i}) & = & v_p(\frac{1}{(p^k+1-2i)!} c_w^{p^k+1-2i}) + v_p(c_{0,g_v, 2i}) \\ 
  \ref{min pval} & \geq & v_p(\frac{1}{(p^k+1-2i)!} c_w^{p^k+1-2i}) + \min_{N \geq i} v_p\big(\frac{1}{(2N)!} c_v^{2N} p^{i} \binom{N}{i}\big) \\
  & \geq & v_p(c_w^{p^k + 1 - 2i} * c_v^{2i} * p^i * \frac{1}{(p^k+1)!}) + v_p(\binom{p^k+1}{2i}) \\ 
  & + & \min_{N \geq i} v_p(c_v^{2N-2i} * \frac{1}{(N-i)!} * \frac{(2i-1)!!}{(2N-1)!!}) \\
  \ref{comb pk} & > & v_p\big(\frac{1}{(p^k)!} c_v^{p^k + 1} p^{\frac{p^k + 1}{2}}\big) + k + (2N-2i)n_v - \frac{N-i}{p-1} \\
  & + & \min_{N \geq i} v_p(\frac{1}{(2i+1)\cdots(2N-1)}) \\
  \mathrm{Lem} ~ \ref{factorial pval} & \geq & v_p\big(\frac{1}{(p^k)!} c_v^{p^k + 1} p^{\frac{p^k + 1}{2}}\big) + (2N-2i)(n_v - \frac{1}{p-1}) \\
  & \geq & v_p\big(\frac{1}{(p^k)!} c_v^{p^k + 1} p^{\frac{p^k + 1}{2}}\big). \end{eqnarray*}
  When $i = 0$,
  \[ v_p(c_{g_w,p^k + 1} c_{0,g_v, 0}) = v_p\big(\frac{1}{(p^k)!} c_w^{p^k + 1}\big) > v_p\big(\frac{1}{(p^k)!} c_v^{p^k + 1} p^{\frac{p^k + 1}{2}}\big). \]
  By (\ref{k power}), $v_p(c_{g_w,0} c_{0,g_v, p^k + 1}) = v_p\big(\frac{1}{(p^k)!} c_v^{p^k + 1} p^{\frac{p^k + 1}{2}}\big)$ is the unique term with smallest $p$-valuation in the sum $\sum_{i=0}^m c_{g_w, m-i} c_{0, g_v, i}$, we conclude
  \[ v_p(c_{0,g,p^k+1}) = v_p\big(\frac{1}{(p^k)!} c_v^{p^k + 1} p^{\frac{p^k + 1}{2}}\big) \sim (p^k+1)(n_v + \frac{1}{2} - \frac{1}{p-1}). \]
  \item If $n_v \geq n_w$, we can similarly conclude \[ v_p(c_{0,g,p^k+1}) = v_p\big(\frac{1}{(p^k)!} c_w^{p^k + 1}\big) \sim (p^k+1)(n_w - \frac{1}{p-1}), \] for any arbitrarily large $k$. 
\end{itemize}

In either case, $v_p(c_{0,g,p^k+1}) \ll (p^k+1) n$. Therefore, $g = \sum_{m=0}^\infty c_{0, g, m} w^m + \sum_{m=0}^\infty c_{1, g, m} w^m v$ does not lie in $W^{n,\mathrm{id}}_{\lambda, \chi}$ by the coefficients convergence description of Rem \ref{power convergence rate}.
\end{proof}

\begin{theorem}\label{non isomorphic}
Two $\wUgnK$ weight modules $W^{n, g}_{\lambda, \chi}$ and $W^{n, g'}_{\lambda, \chi}$ are isomorphic to each other if and only if $g^{-1}g' \in T \cdot G^{p^n}$.
\end{theorem}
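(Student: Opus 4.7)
My plan is to prove both directions by embedding the cyclic modules $W^{n,g}_{\lambda,\chi}$ and $W^{n,g'}_{\lambda,\chi}$ into $W_{\lambda,\chi}$ via (\ref{g inclusion of Ws}), under which the cyclic generators land on $[g^{-1}]$ and $[(g')^{-1}]$ respectively. The two main inputs are Corollary \ref{uniqueness ev}, which gives one-dimensional eigenspaces of $h$ in $W_{\lambda,\chi}$, and Proposition \ref{g not in id}, which detects membership of group elements in $W^{n,\mathrm{id}}_{\lambda,\chi}$.

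For the ``only if'' direction, given a $\wUgnK$-module isomorphism $\phi \colon W^{n,g}_{\lambda,\chi} \xrightarrow{\sim} W^{n,g'}_{\lambda,\chi}$, I first note that $[1]_g$ is annihilated by $g^{-1}sg - \lambda(s)$ for $s \in p^n\mathfrak{t}$, so specializing $s = p^n h$ and dividing by $p^n$, the image $\phi([1]_g)$ regarded inside $W_{\lambda,\chi}$ is an eigenvector of $g^{-1}hg$ with eigenvalue $\lambda(h)$. Left-multiplying by $g$ converts this into an $h$-eigenvector of weight $\lambda(h)$, which by Corollary \ref{uniqueness ev} must be a scalar multiple of $[1]$, giving $\phi([1]_g) = c \cdot [g^{-1}]$ in $W_{\lambda,\chi}$ for some $c \in K^\times$. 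Consequently $[g^{-1}] \in \wUgnK \cdot [(g')^{-1}]$; writing $[g^{-1}] = v \cdot [(g')^{-1}]$ with $v \in \wUgnK$, lifting to $\wUgK$ to get $g^{-1} - v(g')^{-1} \in I(\lambda,\chi)$, and left-multiplying by $g'$ while using that $\mathrm{Ad}(g')$ preserves $\wUgnK$, I obtain $[g'g^{-1}] = \mathrm{Ad}(g')(v) \cdot [1] \in W^{n,\mathrm{id}}_{\lambda,\chi}$. Proposition \ref{g not in id} applied to $g'g^{-1}$ then forces $g'g^{-1} \in T \cdot G^{p^n}$, which yields the desired coset condition on $g^{-1}g'$.

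For the ``if'' direction, write $g^{-1}g' = th$ with $t \in T$ and $h \in G^{p^n}$, so that $(g')^{-1} = h^{-1}t^{-1}g^{-1}$. Since $h$ lies in $G^{p^n} \subset \wUgnK^\times$, the corresponding inner automorphism absorbs $h^{-1}$, reducing us to the case $g' = gt$. For this, I would exploit that $T$ is abelian with $\mathrm{Lie}(T) \supseteq \mathfrak{t}$, so $t$ acts on the weight vector $[1] \in W_{\lambda,\chi}$ by a scalar in $K^\times$ obtained by exponentiating $\lambda$, and combine this scalar action with the quaternion-algebra commutator bounds derived from (\ref{lie brackets}) and the explicit basis of Lemma \ref{W basis} to construct an invertible intertwiner in $\wUgnK$ realizing the module isomorphism $W^{n,g}_{\lambda,\chi} \cong W^{n,gt}_{\lambda,\chi}$. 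The main obstacle lies precisely here: since for generic $g$ and $t$ the conjugate $gtg^{-1}$ only lies in $T \cdot G^p$ and not in $T \cdot G^{p^n}$ for $n \geq 2$, the intertwiner cannot be built directly from an inner automorphism and must instead be obtained through a careful iterated absorption of commutator terms into the completed enveloping algebra, making essential use of the coefficient estimates from the proof of Proposition \ref{g not in id}.
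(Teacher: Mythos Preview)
Your ``only if'' direction follows the paper's argument: embed both modules in $W_{\lambda,\chi}$ via (\ref{g inclusion of Ws}), use Corollary \ref{uniqueness ev} to pin down the image of the cyclic generator up to scalar, rearrange so that a group element lands inside $W^{n,\mathrm{id}}_{\lambda,\chi}$, and invoke Proposition \ref{g not in id}. The paper does this after first reducing to $g' = \mathrm{id}$, but the substance is the same.

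Your ``if'' direction, however, manufactures an obstacle that is not there. The paper's proof is one line: reduce to $g' = \mathrm{id}$, write $g = t_1 g_1$ with $t_1 \in T$ and $g_1 \in G^{p^n}$, and observe that since $T$ is commutative with associated Lie algebra $\frak{t}$, the element $t_1$ \emph{centralizes} $\frak{t}$. Hence $g^{-1}\frak{t}g = g_1^{-1}t_1^{-1}\frak{t}t_1g_1 = g_1^{-1}\frak{t}g_1$ and $g \circ \lambda = g_1 \circ \lambda$, so that $I(g \circ \lambda, \chi) = I(g_1 \circ \lambda, \chi)$ as ideals and $W^{n,g}_{\lambda,\chi} = W^{n,g_1}_{\lambda,\chi}$ literally. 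Right multiplication by $g_1 \in G^{p^n} \subset \wUgnK$ then gives the $\wUgnK$-equivariant isomorphism $W^{n,\mathrm{id}}_{\lambda,\chi} \to W^{n,g}_{\lambda,\chi}$, $m \mapsto m g_1$. No commutator absorption and no coefficient estimates from Proposition \ref{g not in id} are needed. Your factorization $g^{-1}g' = th$ places $t$ to the \emph{right} of $g$, forcing it to interact with the conjugated torus $g^{-1}\frak{t}g$ (which $t$ does not centralize) rather than with $\frak{t}$ itself; once the torus element sits on the left, the difficulty dissolves.
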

\begin{proof}
  We may assume $g' = \mathrm{id}$ is the identity element. If $g \in T \cdot G^{p^n}$ such that $g = t_1 g_1$ for $t_1 \in T, ~ g_1 \in G^{p^n}$, it is direct to check \begin{eqnarray*} W^{n, \mathrm{id}} & \to & W^{n, g} \\ m & \mapsto & m g_1 \end{eqnarray*} is a $\wUgnK$-equivariant isomorphism as $T$ is commutative and $g^{-1} \circ \lambda = g_1^{-1} \circ \lambda$.

  If $g \notin T \cdot G^{p^n}$, we consider the embeddings (\ref{inclusion of Ws}, \ref{g inclusion of Ws})
  \[ W^{n,\mathrm{id}}_{\lambda, \chi} \hookrightarrow W_{\lambda, \chi}, ~~~  W^{n, g}_{\lambda, \chi} \hookrightarrow W_{\lambda, \chi}. \]

  The image of $1 \in \wUgnK$ in $W^{n,\mathrm{id}}_{\lambda, \chi} \hookrightarrow W_{\lambda, \chi}$ is a unique eigenvector in $W_{\lambda, \chi}$ up to a scaler by Cor \ref{uniqueness ev}. 

  If $W^{n, g}_{\lambda, \chi}$ and $W^{n, \mathrm{id}}_{\lambda, \chi}$ are isomorphic, $1$ must also lie in $W^{n, g}_{\lambda, \chi}$ via embedding (\ref{g inclusion of Ws}). 
  Equivalently, the group element $g$ as an element in $W_{\lambda, \chi}$ must lie in $W^{n, g}_{\lambda, \chi}$: we have the following expression
  \[ (u+t_g) g^{-1} = 1 + t, ~~ \mathrm{where} ~ u \in \wUgnK, ~ t \in I(\lambda, \chi), ~ t_g \in I(g\circ\lambda, \chi). \]
  Multiplying $g$ on the left to both sides, we get
  \[ g = gug^{-1} + gt_g g^{-1} - gt.\]
  As $G$ is uniform pro-$p$, $G^{p^n}$ is normal in $G$, $g$ stablizes $\wUgnK$. Note that $gI(g\circ\lambda, \chi)g^{-1} = I(g\circ\lambda, \chi)g^{-1} = I(\lambda, \chi)$. We conclude $g$ lies in $W^{n,\mathrm{id}}_{\lambda, \chi}$, contradicting Prop \ref{g not in id}.
\end{proof}

\section{Construction of the representations}\label{Con}
We refer to \S \ref{padic quaternion} for notations.
Given a continuous (or equivalently, locally analytic) character $\lambda : T_D \to \QQ_p^\times$, we construct a Banach representation of $D^\times$
\[ V_\lambda := \{ f \in C(D^\times, \QQ_p) ~ | ~ f(gt) = \lambda(t)f(g) ~~ \for ~ \forall ~ t \in T_D,  ~ g \in D^\times \}\]

\begin{lem}\label{V lambda adm}
For any continuous character $\lambda : T_D \to \QQ_p^\times$, the Banach representation $V_\lambda$ is admissible.
\end{lem}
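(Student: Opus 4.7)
The plan is to verify admissibility by restricting to the compact open subgroup $\cO_D^\times \subset D^\times$ and recognizing $V_\lambda$ as a continuous induction from a closed subgroup with compact quotient, for which admissibility is standard.

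First I would use the decomposition $D^\times = \cO_D^\times \rtimes (\sqrt{pI})^\ZZ$ together with $\sqrt{pI} \in T_D$ to conclude $D^\times = \cO_D^\times \cdot T_D$. Restriction $f \mapsto f|_{\cO_D^\times}$ then gives a $\cO_D^\times$-equivariant Banach space isomorphism
\[
V_\lambda \xrightarrow{\sim} \{f \in C(\cO_D^\times, \QQ_p) \mid f(gt) = \lambda(t)f(g) \text{ for all } t \in T_D^0\},
\]
where $T_D^0 := T_D \cap \cO_D^\times$ is compact. Since $\lambda|_{T_D^0}$ therefore takes values in $\ZZ_p^\times$, the right-hand side is the space of continuous sections of a line bundle over the compact base $\cO_D^\times / T_D^0$, and is a $\QQ_p$-Banach space under the sup-norm (with the left translation $\cO_D^\times$-action, extending to $D^\times$ via scalars).

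Next, since admissibility of a Banach representation may be tested on any compact open subgroup, it suffices to treat $V_\lambda$ as a $\cO_D^\times$-representation. I would let $V_{\lambda,0} \subset V_\lambda$ be the unit ball (a $\cO_D^\times$-stable $\ZZ_p$-lattice) and invoke the Schneider-Teitelbaum criterion: admissibility is equivalent to $V_{\lambda,0}/p V_{\lambda,0}$ being an admissible smooth $\cO_D^\times$-representation over $\F_p$, i.e.\ having finite-dimensional invariants under every open subgroup $H' \subset \cO_D^\times$. These invariants consist of locally constant $\F_p$-valued functions on $\cO_D^\times$ satisfying the $T_D^0$-equivariance and factoring through $H' \backslash \cO_D^\times$; hence they are parameterized by the finite double coset set $H' \backslash \cO_D^\times / T_D^0$, each orbit contributing at most one dimension (and vanishing unless $\bar\lambda$ restricts trivially to $g^{-1}H'g \cap T_D^0$). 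This yields
\[
\dim_{\F_p}(V_{\lambda,0}/p V_{\lambda,0})^{H'} \leq |H' \backslash \cO_D^\times / T_D^0| \leq [\cO_D^\times : H'] < \infty,
\]
which completes the verification.

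No real obstacle is expected here: the entire content is the well-known fact that continuous induction of a character from a closed subgroup with compact quotient preserves admissibility, and the compactness of $D^\times/T_D \cong \cO_D^\times/T_D^0$ makes the reduction to the compact case immediate.
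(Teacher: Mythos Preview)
Your proposal is correct, and its first step coincides with the paper's: use $D^\times=\cO_D^\times\cdot T_D$ (since $\sqrt{pI}\in T_D$) to identify $V_\lambda$, as an $\cO_D^\times$-representation, with the space of $T_D\cap\cO_D^\times$-equivariant continuous functions on $\cO_D^\times$. From here the two arguments diverge. The paper simply observes that this space is a \emph{closed subrepresentation} of $C(\cO_D^\times,\QQ_p)$, which is admissible (it is the regular representation of a compact $p$-adic Lie group), and invokes the fact that closed subrepresentations of admissible Banach representations are admissible. Your route instead verifies the Schneider--Teitelbaum criterion by hand, bounding $(V_{\lambda,0}/pV_{\lambda,0})^{H'}$ via the finite double coset set $H'\backslash\cO_D^\times/T_D^0$. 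Both are valid; the paper's argument is shorter and avoids any computation, while yours is more self-contained and makes the finiteness completely explicit (and in fact proves the slightly stronger general principle that continuous induction of a unitary character along a compact quotient is admissible).
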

\begin{proof}
We have $\cO_D^\times T_D = D^\times \simeq \cO_D^\times \times (\sqrt{p})^\ZZ$.
As a representation of $\cO_D^\times$,
\[ V_\lambda \simeq \{ f \in C(\cO_D^\times, \QQ_p) ~ | ~ f(gt) = \lambda(t)f(g) ~~ \for ~ \forall ~ t \in T^1_D \times \mu_{p-1},  ~ g \in \cO_D^\times \}. \]
This is a closed sub representation of $C(\cO_D^\times, \QQ_p)$, hence admissible.
\end{proof}

Similarly, we define the admissible Banach representation of $U^2_D$
\[ V^2_\lambda := \{ f \in C(U^2_D, \QQ_p) ~ | ~ f(gt) = \lambda(t)f(g) ~~ \for ~ \forall ~ t \in T^2_D,  ~ g \in D^\times \}. \]

For $T^2_D$, we may pick two topological generators \[ t_1 = 1+pI \in Z_D \cap T_D, ~~ t_2 = 1+p\cdot \sqrt{pI} \in T_D \backslash Z_D \cap T_D. \]

Let $\lambda_1 := \lambda(t_1), \lambda_2 := \lambda(t_2)$. We have the following left exact sequence of admissible Banach representations
\begin{eqnarray*} 0 \to V^2_\lambda \to C(U^2_D, \QQ_p) & \to & C(U^2_D, \QQ_p) \oplus C(U^2_D, \QQ_p) \\ f & \mapsto & (f( \cdot t_1) - \lambda_1 f, f( \cdot t_2) - \lambda_2 f) \end{eqnarray*}
The representation $V^2_\lambda$ is dual to the cyclic Iwasawa module
\[ I^{2,\lambda} := \QQ_p[[U^2_D]] / \QQ_p[[U^2_D]] \cdot (t_1-\lambda_1, t_2-\lambda_2) \] as the above left exact sequence is dual to the following right exact sequence
\begin{eqnarray*} \QQ_p[[U^2_D]] \oplus \QQ_p[[U^2_D]] & \to & \QQ_p[[U^2_D]] \to I^{2,\lambda} \to 0 \\ (x,y) & \mapsto & x (t_1-\lambda_1) + y (t_2-\lambda_2). \end{eqnarray*}
We may base change $I^{2,\lambda}$ via (\ref{ST dis})
\[ D^{2,\lambda} := D(U^2_D, \QQ_p) \otimes_{\QQ_p[[U^2_D]]} I^{2,\lambda}, \]
\[ D^{2,\lambda}_r := D_r(U^2_D, \QQ_p) \otimes_{\QQ_p[[U^2_D]]} I^{2,\lambda}. \]
Let $\chi : Z(\frak{g}_K) \to K$ be an infinitesimal character for a finite extension $K / \QQ_p$, and let $D^2_r$ be $D_r(U^2_D, \QQ_p)$. We define 
\[ D^{2,\chi} := D(U^2_D, \QQ_p) \otimes_{Z(\frak{g}_K), \chi} K, \]
\[ D^{2,\chi}_r := D^2_r \otimes_{Z(\frak{g}_K), \chi} K, \]
\[ D^{2,\lambda, \chi} := (D^{2,\lambda} \otimes_{\QQ_p} K) \otimes_{Z(\frak{g}_K), \chi} K \cong D^{2,\chi} \otimes_{\QQ_p[[U^2_D]]} I^{2,\lambda}, \]
\[ D^{2,\lambda, \chi}_r := (D^{2,\lambda}_r \otimes_{\QQ_p} K) \otimes_{Z(\frak{g}_K), \chi} K \cong D^{2,\chi}_r \otimes_{\QQ_p[[U^2_D]]} I^{2,\lambda}.\]

\begin{theorem}\label{length 2}
If $\lambda$ is generic in the sense of Def (\ref{infinitesimal genericity}) and $r = \sqrt[p^n]{1 / p}$ for $n \in \ZZ_{\geq 1}$, the length of $D^{2,\lambda,\chi}_r$ (resp. $W^{n,\mathrm{id}}_{\lambda, \chi}$) as a $D^2_r$ (resp. $\wUgnK$) module is at most 2.
\end{theorem}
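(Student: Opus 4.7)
The plan is to first prove the bound for $W^{n,\mathrm{id}}_{\lambda,\chi}$ as a $\wUgnK$-module, and then transport it to $D^{2,\lambda,\chi}_r$. The idea for the transport is that at the critical radius $r = p^{-1/p^n}$, the Banach algebra $D^2_r$ and the completed enveloping algebra $\wUgnK$ are linked by the microlocalisation map~(\ref{micro}), and that after quotienting $D^2_r$ by the $T^2_D$-weight ideal generated by $t_1-\lambda_1,\,t_2-\lambda_2$ and by the central character ideal cutting out $\chi$, one identifies $D^{2,\lambda,\chi}_r$ with $W^{n,\mathrm{id}}_{\lambda,\chi}$ (up to a finite-length Morita-type reduction accounting for the mismatch between the group weight $\lambda$ on $T^2_D$ and the infinitesimal weight on $\frak{t}_D$). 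Granting this, the two length statements coincide, and it suffices to bound the $\wUgnK$-length of $W^{n,\mathrm{id}}_{\lambda,\chi}$.

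For the latter, I would use the embedding $W^{n,\mathrm{id}}_{\lambda,\chi} \hookrightarrow W_{\lambda,\chi} \hookrightarrow W_{\tilde{\lambda},\tilde{\chi}}$ of Remark~\ref{power convergence rate} to transport the analysis into the $\frak{gl}_{2,\sqrt{p}}$-module $W_{\tilde{\lambda},\tilde{\chi}}$, where $h$ acts semisimply with one-dimensional eigenspaces via the eigenbasis $\{1,e_0^k,f_0^k\}$ (Corollary~\ref{uniqueness ev}). Every $\wUgnK$-submodule is $h$-stable, hence determined by the $h$-eigenvectors it contains. The next point is that the dense algebraic subspace $M_\lambda := U(\frak{g}_K)/I(\lambda,\chi)$ inside $W^{n,\mathrm{id}}_{\lambda,\chi}$ is simple as a $U(\frak{g}_K)$-module: genericity of $\lambda$ (Definition~\ref{infinitesimal genericity}) prevents the scalars governing $f_0\cdot m_k$ from vanishing, so the bi-infinite chain of eigenvectors $(m_k)_{k\in\ZZ}$ is a single irreducible $U(\frak{g}_K)$-orbit, and there are no finite-dimensional subquotients. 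By density in the Banach norm, any proper closed $\wUgnK$-submodule $N\subsetneq W^{n,\mathrm{id}}_{\lambda,\chi}$ then satisfies $N\cap M_\lambda=0$; in particular the quotient $W^{n,\mathrm{id}}_{\lambda,\chi}/N$ is topologically cyclic on the image of $1$.

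To conclude, I would identify a specific maximal proper closed $\wUgnK$-submodule $N_{\max}$, characterized by a coefficient decay rate strictly faster than the ambient rate $p^{-n|\alpha|}$ dictated by Remark~\ref{power convergence rate}, argue that any proper closed submodule is contained in $N_{\max}$, and show that $N_{\max}$ itself is simple. The key technical input is the coefficient-growth estimate of Proposition~\ref{g not in id}, together with Lemma~\ref{factorial pval}: they constrain precisely how the $p$-adic valuations of the coefficients of an element of $\wUgnK$ can behave under the $w,v$-expansion, and so rule out intermediate "convergence classes" that would be stable under the shift-type action of $e_0,f_0$. I expect this last step to be the main obstacle: in the completed setting, analytic limits furnish many a priori candidates for submodules, and the generic hypothesis on $\lambda$, rather than simply the non-completed simplicity of $M_\lambda$, is what ultimately prevents further refinement of the filtration $0 \subset N_{\max} \subset W^{n,\mathrm{id}}_{\lambda,\chi}$ and pins the length at two.
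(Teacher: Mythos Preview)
Your plan has two genuine gaps, and it misses the mechanism the paper actually uses.

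First, the transport step is wrong. The modules $D^{2,\lambda,\chi}_r$ and $W^{n,\mathrm{id}}_{\lambda,\chi}$ are \emph{not} identified by quotienting out the weight and central ideals: as a $\wUgnK$-module one has (Proposition~\ref{Ugn decomposition}) $D^{2,\lambda,\chi}_r \cong \bigoplus_{g \in G/T\cdot G^{p^n}} W^{n,g}_{\lambda,\chi}$, a direct sum indexed by a coset space of size growing with~$n$. The $D^2_r$-length of $D^{2,\lambda,\chi}_r$ is therefore not the $\wUgnK$-length of a single $W^{n,\mathrm{id}}_{\lambda,\chi}$; the two statements require a common argument, not a reduction of one to the other. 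Second, your direct attack on $W^{n,\mathrm{id}}_{\lambda,\chi}$ stalls at the decisive point. Granting that any proper closed submodule misses the dense simple algebraic core $M_\lambda$, you still need to produce $N_{\max}$ and prove it simple. Proposition~\ref{g not in id} and Lemma~\ref{factorial pval} locate particular \emph{group elements} in $W_{\lambda,\chi}$ relative to the subspace $W^{n,\mathrm{id}}_{\lambda,\chi}$; they say nothing about the internal submodule lattice of $W^{n,\mathrm{id}}_{\lambda,\chi}$ and do not rule out intermediate convergence classes stable under $\wUgnK$.

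The paper's proof is entirely different and treats both cases at once via associated graded rings. Writing $A$ for either $D^2_r$ or $\wUgnK$ and $M$ for the corresponding module, one equips $A$ with the structure of a complete doubly filtered $K$-algebra in the sense of \cite[Def.~3.1]{AW13} and checks (via Lazard's theorem on $\gr_J R[[U^2_D]]$) that $\Gr(A) \cong k[\mathbb{I},H,W,V]$ is a commutative polynomial ring. The two torus relations kill $\mathbb{I},H$, and the Casimir relation~(\ref{casimir}) reduces $V^2$ modulo $p,H$; hence $\Gr(M) \cong k[W,V]/(V^2)$. Now if $M$ had length $\geq 3$, passing to good filtrations on a chain of subquotients (\cite[Prop.~3.3]{AW13}) one subquotient $M_{sq}$ would have $\dim_k \Gr(M_{sq}) < \infty$, so by \cite[Prop.~9.1(a)]{AW13} $M_{sq}$ itself is finite-dimensional over $K$, contradicting genericity of~$\lambda$. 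Genericity thus enters only at the last line, and no explicit submodule such as your $N_{\max}$ is ever constructed.
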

\begin{proof}
To ease notations, we use $A$ to denote $D^2_r$ (resp. $\wUgnK$) and $M$ to denote $D^{2,\lambda,\chi}_r$ (resp. $W^{n,\mathrm{id}}_{\lambda, \chi}$).

As $D^{2,\lambda,\chi}_r$ is complete with respect to the norm $\| ~ \|_r$ (\ref{r norm}) and $W^{n,\mathrm{id}}_{\lambda, \chi}$ is $p$-adically complete, we will illustrate that $A$ is a complete doubly filtered $K$-algebra in the sense of \cite[Def 3.1]{AW13} in the following, therefore induces a filtration on $M$.

We can assume all $p^n$-th roots of $p$ are contained in $K$. Otherwise, we can always pass to a larger field to prove our statement. Let $k$ be the residue field of $K$. We pick a $p^n$-th root of $p$ and denote it by $p^{\frac{1}{p^n}}$. We take $A$ as $D^2_r$ to illustrate the general case for the rest of the proof.

Any $a \in A$ can be expressed as 
\[ a = \sum_{\alpha \in \mathbb{N}^{d}} \lambda_{\alpha} \mathbf{b}^{\alpha} \mid \lambda_\alpha \in K, ~ \sup_{\alpha \in \mathbb{N}^{d}} |\lambda_\alpha| < \infty \]
via (\ref{Dr series}), where 
\[ \mathbf{b}^{\alpha} = \mathbb{I}^{\alpha_I} \cdot H^{\alpha_h} \cdot W^{\alpha_w} \cdot V^{\alpha_v}, \]
\[ \mathbb{I} := \frac{1}{p^{\frac{1}{p^n}}} (\log^{-1}(I)-1), ~ H := \frac{1}{p^{\frac{1}{p^n}}} (\log^{-1}(h)-1), \] 
\[ W :=\frac{1}{p^{\frac{1}{p^n}}} (\log^{-1}(w)-1), ~ V := \frac{1}{p^{\frac{1}{p^n}}} (\log^{-1}(v)-1) \] such that the log map is defined by (\ref{log 2D}) and the elements $I, h, w, v$ are defined above (\ref{lie brackets}).
For $\| ~ \|_r$ on $D^2_r$, the filtration is defined by
\[ F_{s}^{r} A:=\left\{ a \in A:\|a\|_{r} \leq p^{s} \right\}. \]

With respect to this filtration, the augmentation ideal $\frak{m}$ of $\gr_0(A)$ as the kernel of $\gr_0(A) \twoheadrightarrow k$ is generated by $\mathbb{I}, H, W, V$. A basis of $\frak{m}^l / \frak{m}^{l+1}$ is given by \[ \{\mathbf{b}^{\alpha} \big| |\alpha| = l \}. \]
Let $J$ be the augmentation ideal of $R[[U^2_D]]$. We apply Lazard's famous theorem on the structure of the graded ring of completed group algebra to the uniform pro-$p$ group $U^2_D$ 
\[ \gr_J(R[[U^2_D]]) \cong k[\tilde{p}, p^{\frac{1}{p^n}}\mathbb{I}, p^{\frac{1}{p^n}}H, p^{\frac{1}{p^n}}W, p^{\frac{1}{p^n}}V], \]
where $k[\tilde{p}, p^{\frac{1}{p^n}}\mathbb{I}, p^{\frac{1}{p^n}}H, p^{\frac{1}{p^n}}W, p^{\frac{1}{p^n}}V]$ is commutative. This implies 
\[ [p^{\frac{l_1}{p^n}} \mathbf{b}^{\alpha_1}, p^{\frac{l_2}{p^n}} \mathbf{b}^{\alpha_2}] \in J^{l_1 + l_2 + 1} ~ \mathrm{for} ~ \forall \ \alpha_1, \alpha_2 \in \mathbb{N}^{d} \big| |\alpha_1| = l_1, |\alpha_2| = l_2. \]
Note that $\|a\|_r \leq r^l$ for any $a \in J^l$, we get
\[ \|[\mathbf{b}^{\alpha_1}, \mathbf{b}^{\alpha_2}]\|_r \leq r < 1, ~ \forall \alpha_1, \alpha_2 \in \mathbb{N}^{d} \] or equivalently,
\[ [\mathbf{b}^{\alpha_1}, \mathbf{b}^{\alpha_2}] \in F^r_{<0} A, ~ \forall \alpha_1, \alpha_2 \in \mathbb{N}^{d}. \]
We conclude $\gr_0(A)$, and therefore $\Gr(A)$ are commutative,
\[ \Gr (A) \cong \gr_0(A) \cong k[\mathbb{I}, H,W,V]. \]
By \ref{casimir}, we have
\[ \Gr (M) \cong k[W,V] / (V^2). \]

We can choose good double filtrations on $M$'s subquotients as in \cite[Prop 3.3]{AW13}.
Subquotients of $M$ induce subquotients on $\Gr(M)$.
If $M$ has length at least 3, one of its subquotients $M_{sq}$ satisfies
\[ \dim_{k} \Gr (M_{sq}) < \infty. \]
By \cite[Prop 9.1, (a)]{AW13}, $M_{sq}$ itself is finite dimensional over $K$, contradicting to the genericity assumption on $\chi$ (Def \ref{infinitesimal genericity}).
\end{proof}

To ease notation, we take $G := U^2_D$ in \S \ref{padic quaternion} for the following 3 propositions: Prop \ref{Ugn decomposition}, Prop \ref{M decomposition}, Prop \ref{restriction invariance}.
\begin{prop}\label{Ugn decomposition}
  If $r = \sqrt[p^n]{1 / p}$ for $n \in \ZZ_{\geq 1}$, $D^{2,\lambda,\chi}_r$ decomposes as 
  \begin{eqnarray}\label{D2 decomposition} D^{2,\lambda,\chi}_r \cong \bigoplus_{g \in G / T \cdot G^{p^n}} W^{n, g}_{\lambda, \chi}\end{eqnarray}
  as a left $\wUgnK$-module.
\end{prop}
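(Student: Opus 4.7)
The plan is to decompose $D^{2,\lambda,\chi}_r$ by tracking the PBW-type decomposition of $D^2_r$ as a free left $\wUgnK$-module of rank $|G/G^{p^n}|$ through the successive quotients $D^2_r \twoheadrightarrow D^{2,\lambda}_r \twoheadrightarrow D^{2,\lambda,\chi}_r$ and reassembling along cosets of $TG^{p^n}$.

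First I would establish a PBW-style free decomposition
\[ D^2_r \;=\; \bigoplus_{g' \in G/G^{p^n}} \wUgnK \cdot g' \]
as a left $\wUgnK$-module, for any choice of coset representatives. This uses the Lazard/PBW setup already invoked in the proof of Theorem \ref{length 2}: at the critical radius $r = p^{-1/p^n}$, elements of $G^{p^n}$ are exponentials of elements of $p^n \frak{g}_D$ by Lem \ref{log iso} and hence lie inside $\wUgnK \subset D^2_r$, so that the coset representatives extend to a $\wUgnK$-basis via the series description (\ref{Dr series}).

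Next I would choose nested representatives: $\{g_\alpha\}$ for $G/TG^{p^n}$ together with $\{t_\beta\}$ for $T/(T \cap G^{p^n})$, so that $\{g_\alpha^{-1} t_\beta\}$ exhausts $G/G^{p^n}$. Since $T$ is abelian and the kernel of $\lambda: K[[T]] \to K$ is generated by $t_1-\lambda_1, t_2-\lambda_2$, the element $t_\beta - \lambda(t_\beta)$ lies in that ideal, so
\[ u \cdot g_\alpha^{-1} t_\beta \;\equiv\; \lambda(t_\beta)\, u \cdot g_\alpha^{-1} \pmod{D^2_r (t_1-\lambda_1, t_2-\lambda_2)}. \]
This collapses the $\beta$-factor and yields $D^{2,\lambda}_r = \bigoplus_{\alpha} \wUgnK \cdot g_\alpha^{-1}$ as a left $\wUgnK$-module. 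To identify each block after imposing the infinitesimal character, I would define for each $g = g_\alpha$ the candidate $\wUgnK$-linear map
\[ \phi_g: W^{n,g}_{\lambda,\chi} \to D^{2,\lambda,\chi}_r, \qquad [u] \mapsto u \cdot g^{-1} \bar 1, \]
where $\bar 1$ is the class of $1 \in D^2_r$. Well-definedness reduces to showing that $I(g \circ \lambda, \chi)$ annihilates $g^{-1}\bar 1$. For $z - \chi(z)$ with $z \in Z(\frak{g}_K)$, the element $z$ commutes with both group and Lie-algebra elements in $D^2_r$, so $(z-\chi(z)) g^{-1}\bar 1 = g^{-1}(z-\chi(z))\bar 1 = 0$ by the $\chi$-tensor. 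For $t - (g\circ \lambda)(t)$ with $t = g^{-1} t' g$ ($t' \in p^n \frak{t}$), the key identity $t'\bar 1 = \lambda(t')\bar 1$ in $D^{2,\lambda,\chi}_r$ (obtained by noting $t'-\lambda(t') \in D(T,K)$ lies in the kernel of $\lambda$, hence in the left ideal generated by $t_i-\lambda_i$) gives
\[ (g^{-1} t' g - \lambda(t')) g^{-1} \bar 1 \;=\; g^{-1} t' \bar 1 - \lambda(t') g^{-1} \bar 1 \;=\; g^{-1}(t' - \lambda(t'))\bar 1 \;=\; 0. \]
Assembling the $\phi_{g_\alpha}$ and using the free decomposition of the second step (which survives the central tensor by $\chi$, block by block) produces the desired isomorphism.

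The main obstacle will be the first step: giving a clean free PBW-type decomposition of $D^2_r$ over $\wUgnK$ at the precise radius $r = p^{-1/p^n}$. This is the technical heart, requiring careful convergence bookkeeping in the series description (\ref{Dr series}); everything afterwards is essentially formal once this structure is in place. A secondary, mostly notational, subtlety is the appearance of $g^{-1}$ rather than $g$ in the identification map $\phi_g$, reflecting the bijection $G/TG^{p^n} \cong TG^{p^n} \backslash G$ and the convention that $W^{n,g}_{\lambda,\chi}$ is built from the conjugate torus $g^{-1}\frak{t}g$, which must be tracked carefully when matching Definitions.
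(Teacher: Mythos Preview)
Your proposal is correct and follows essentially the same route as the paper: both arguments rest on a PBW-type free decomposition of the distribution algebra at the critical radius over $\wUgnK$, and both define the identification map $W^{n,g}_{\lambda,\chi}\to D^{2,\lambda,\chi}_r$ by $u\mapsto u\cdot g^{-1}$. The only differences are cosmetic: the paper quotients by $\chi$ first and invokes \cite[Prop~5.3]{Fu24} to obtain $D^{2,\chi}_r\cong\bigoplus_{h\in G/G^{p^n}}\widehat{U(\frak{g})_{n,K}^{\chi}}\cdot h^{-1}$ (the step you flag as the ``main obstacle''), whereas you propose to establish the free decomposition of $D^2_r$ directly and quotient by $\lambda$ before $\chi$; and the paper writes down the inverse map explicitly rather than inferring bijectivity from freeness.
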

\begin{proof}
Let $\widehat{U(\frak{g})_{n,K}^{\chi}}$ be the quotient of $\wUgnK$ by $\chi$.
Note that the arguments and references of \cite[Prop 5.3]{Fu24} apply to our group $U^2_D$ (no need for the reductive group to be split), we have
\[ D^{2,\chi}_r \cong \bigoplus_{h \in G / G^{p^n}} \widehat{U(\frak{g})_{n,K}^{\chi}} \cdot h^{-1} \]
as a left $\wUgnK$-module.

For any $g \in G / T \cdot G^{p^n}$, we define a map 
\begin{eqnarray*}
  W^{n, g}_{\lambda, \chi} & \to & D^{2,\lambda,\chi}_r \\
  w & \mapsto & w \cdot g^{-1}
\end{eqnarray*}
similar to (\ref{g inclusion of Ws}). We, therefore obtain a map
\[ \bigoplus_{g \in G / T \cdot G^{p^n}} W^{n, g}_{\lambda, \chi} \to D^{2,\lambda,\chi}_r. \]

For each $g \in G / T \cdot G^{p^n}$, we pick distinct representatives $h_1,\cdots,h_t \in G / G^{p^n}$ lifting $g$ such that $|T \cdot G^{p^n} / G^{p^n}| = t$ and define a $\widehat{U(\frak{g})_{n,K}^{\chi}}$-equivariant map
\begin{eqnarray*}
  \bigoplus_{i=1}^t \widehat{U(\frak{g})_{n,K}^{\chi}} \cdot h_i^{-1} & \to & W^{n, g}_{\lambda, \chi} \\
  \sum_{i=1}^{t} w_i h_i^{-1} & \mapsto & \sum_{i=1}^{t} \lambda(g h_i^{-1}) w_i,
\end{eqnarray*}
extending to \[ D^{2,\chi}_r \cong \bigoplus_{g \in G / T \cdot G^{p^n}} \big(\bigoplus_{i=1}^t \widehat{U(\frak{g})_{n,K}^{\chi}} \cdot h_i^{-1}\big) \to \bigoplus_{g \in G / T \cdot G^{p^n}} W^{n, g}_{\lambda, \chi}. \]
It is direct to check this map factors through
\[ D^{2,\lambda,\chi}_r \to \bigoplus_{g \in G / T \cdot G^{p^n}} W^{n, g}_{\lambda, \chi}, \]
which is an inverse of the previously defined map $\bigoplus_{g \in G / T \cdot G^{p^n}} W^{n, g}_{\lambda, \chi} \to D^{2,\lambda,\chi}_r$.
\end{proof}

For $r = \sqrt[p^n]{1 / p}$ and each $g \in G / T \cdot G^{p^n}$, we define a projection $p^n_g : D^{2,\lambda,\chi}_r \to W^{n, g}_{\lambda, \chi}$ by Prop \ref{Ugn decomposition}.

\begin{prop}\label{M decomposition}
If $\lambda$ is generic and $r = \sqrt[p^n]{1 / p}$ for $n \in \ZZ_{\geq 1}$, any submodule $M \subset D^{2,\lambda,\chi}_r$ decomposes as
\[ M \cong \bigoplus_{g \in G / T \cdot G^{p^n}} p^n_g(M). \]
If $n' < n$ with $r' := \sqrt[p^{n'}]{1 / p}$, $M' := M \otimes_{D^2_r} D^2_{r'}$, we define the map $\mathrm{pr}^n_{n'}$ to make the following diagram commutate
\[ \xymatrix@C=3cm{
M \ar[r]^{\cong} \ar[d] & \bigoplus_{g \in G / T \cdot G^{p^n}} p^n_g(M) \ar[d]^{\mathrm{pr}^n_{n'}}\\
M' \ar[r]^{\cong} & \bigoplus_{g' \in G / T \cdot G^{p^{n'}}} p^{n'}_{g'}(M').\\
}\]
It maps $p^n_g(M)$ to $p^{n'}_{\bar{g}}(M')$, equivalently, \[ \mathrm{pr}^n_{n'}(p^n_g(M)) \subset p^{n'}_{\bar{g}}(M'), \] where $g \in G / T \cdot G^{p^n}$ lifts $\bar{g} \in G / T \cdot G^{p^{n'}}$.
\end{prop}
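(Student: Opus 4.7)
The plan is to deduce the first decomposition from an auxiliary claim that the summands in Prop \ref{Ugn decomposition} share no common simple subquotient, and to derive the refinement compatibility from the naturality of that decomposition. Using Prop \ref{Ugn decomposition} to identify $D^{2,\lambda,\chi}_r$ with $\bigoplus_g W^{n,g}_{\lambda,\chi}$, the inclusion $M \subseteq \bigoplus_g p^n_g(M)$ is automatic, so the task is to prove the reverse containment, equivalently $p^n_g(M) \subseteq M$ for each coset $g$.

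The main step will be the auxiliary claim: for distinct $g, g' \in G/T\cdot G^{p^n}$, the modules $W^{n,g}_{\lambda,\chi}$ and $W^{n,g'}_{\lambda,\chi}$ share no common simple subquotient. By Thm \ref{length 2}, each $W^{n,g}_{\lambda,\chi}$ has length at most $2$, and from the proof of that theorem $\gr W^{n,\mathrm{id}}_{\lambda,\chi} \cong k[W,V]/(V^2)$ is supported on a specific $1$-dimensional subvariety of $\mathrm{Spec}(\gr \wUgnK) \cong \frak{g}_K^*$. For general $g$, the support of $\gr W^{n,g}_{\lambda,\chi}$ is the $\mathrm{Ad}(g)$-translate of this subvariety. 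The genericity hypothesis on $\lambda$ (Def \ref{infinitesimal genericity}) precludes any $0$-dimensional characteristic variety, because a simple subquotient of that form would be finite-dimensional, ruled out exactly as in the proof of Thm \ref{length 2}. Hence every simple subquotient of $W^{n,g}_{\lambda,\chi}$ has characteristic variety equal to the full $1$-dimensional support of $\gr W^{n,g}_{\lambda,\chi}$. The coset-distinguishing analysis of Thm \ref{non isomorphic}, based on Prop \ref{g not in id}, then shows that these translated supports are pairwise distinct for distinct cosets of $T\cdot G^{p^n}$, so simple subquotients of different $W^{n,g}$'s have distinct supports and are therefore non-isomorphic.

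Once disjointness of simple constituents is established, a standard Goursat-type argument, applied to direct sums of finite-length modules with pairwise disjoint Jordan-Hölder series, yields $M = \bigoplus_g (M \cap W^{n,g}_{\lambda,\chi}) = \bigoplus_g p^n_g(M)$. For the refinement compatibility, I would use that the decomposition of Prop \ref{Ugn decomposition} is natural with respect to the ring extension $D^2_r \to D^2_{r'}$ for $n' < n$ and the induced surjection $G/T\cdot G^{p^n} \twoheadrightarrow G/T\cdot G^{p^{n'}}$, $g \mapsto \bar g$: tracing the inverse map in the proof of Prop \ref{Ugn decomposition} shows that base change carries $W^{n,g}_{\lambda,\chi}$ into $W^{n',\bar g}_{\lambda,\chi}$, so $\mathrm{pr}^n_{n'}$ is the map induced summand-wise by these inclusions, giving the desired commutative square and the containment $\mathrm{pr}^n_{n'}(p^n_g(M)) \subseteq p^{n'}_{\bar g}(M')$. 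The hardest part of the argument will be the disjointness of simple subquotients, which requires careful bookkeeping of the $\mathrm{Ad}(g)$-action on characteristic varieties and verifying that distinct cosets of $T \cdot G^{p^n}$ really do produce distinct translates.
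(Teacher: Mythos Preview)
Your characteristic-variety step does not go through, and this is the heart of the argument. The associated graded of $\wUgnK$ (or of $D^2_r$) is commutative precisely because commutators drop in the filtration; this is the Lazard computation invoked in the proof of Thm~\ref{length 2}. The same estimate shows that conjugation by any $g \in G = U^2_D$ acts trivially on $\Gr(A)$: since $g \in 1 + p\cO_D$, one has $\mathrm{Ad}(g) \equiv \mathrm{id} \pmod p$ on $\frak{g}_D$, so $g^{-1}(p^n h)g \equiv p^n h$ in $\gr_0$. Consequently every $W^{n,g}_{\lambda,\chi}$ has the \emph{same} characteristic variety as $W^{n,\mathrm{id}}_{\lambda,\chi}$, and the supports cannot distinguish cosets of $T\cdot G^{p^n}$. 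Your appeal to Thm~\ref{non isomorphic} does not rescue this: that theorem shows the modules are pairwise non-isomorphic via an eigenvector argument inside $W_{\lambda,\chi}$, but says nothing about their composition factors. Non-isomorphic length-$2$ modules can easily share a simple subquotient, so the Goursat step is unjustified.

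The paper avoids this difficulty entirely by working over the larger ring $D^2_r$ rather than $\wUgnK$. Thm~\ref{length 2} bounds the length of $D^{2,\lambda,\chi}_r$ itself by $2$ as a $D^2_r$-module, so the lattice of $D^2_r$-submodules is tiny. A short case analysis on whether $W^{n,\mathrm{id}}_{\lambda,\chi}$ is simple, decomposable of length $2$, or indecomposable of length $2$ then explicitly produces every possible nontrivial $D^2_r$-submodule as a direct sum of $G$-conjugates of the relevant piece of $W^{n,\mathrm{id}}_{\lambda,\chi}$, so each visibly respects the decomposition. The point is that the global length bound over $D^2_r$ replaces any need to compare simple subquotients across different $W^{n,g}$. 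Your treatment of the second (compatibility) assertion is fine and matches the paper's.
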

\begin{proof}
For the first part, we discuss three possible cases.
\begin{itemize}
  \item When $W^{n, \mathrm{id}}_{\lambda, \chi}$ is simple. The set of $\wUgnK$-modules $\{W^{n, g}_{\lambda, \chi} | g \in G / T \cdot G^{p^n}\}$ are all simple as they are conjugated to each other by $G$ actions. They are also pairwise nonisomorphic to each other by Thm \ref{non isomorphic}. We can conclude that $D^{2,\lambda,\chi}_r$ is simple as a $D^{2,\chi}_r$-module, and $M$ is either zero or $D^{2,\lambda,\chi}_r$ itself. The claim is clear from Prop \ref{Ugn decomposition}.
  \item When $W^{n, \mathrm{id}}_{\lambda, \chi}$ is not simple but decomposable. By Thm \ref{length 2}, $W^{n, \mathrm{id}}_{\lambda, \chi}$ decomposes as $W^{n, \mathrm{id}}_{\lambda, \chi, 1} \oplus W^{n, \mathrm{id}}_{\lambda, \chi, 2}$. For any $g \in G / T \cdot G^{p^n}$, $g^{-1} W^{n, \mathrm{id}}_{\lambda, \chi} g$ decomposes as $g^{-1} W^{n, \mathrm{id}}_{\lambda, \chi, 1} g \oplus g^{-1} W^{n, \mathrm{id}}_{\lambda, \chi, 2} g$. We construct 
  \[ D^{2,\lambda,\chi}_{r, 1} := \bigoplus_{g \in G / T \cdot G^{p^n}} g^{-1} W^{n, \mathrm{id}}_{\lambda, \chi, 1} g, ~ D^{2,\lambda,\chi}_{r, 2} := \bigoplus_{g \in G / T \cdot G^{p^n}} g^{-1} W^{n, \mathrm{id}}_{\lambda, \chi, 2} g \] two summands of $D^{2,\lambda,\chi}_r$ via (\ref{D2 decomposition}). Again by \ref{length 2}, $D^{2,\lambda,\chi}_r$ is of length at most 2. The two submodules $D^{2,\lambda,\chi}_{r, 1}$, $D^{2,\lambda,\chi}_{r, 2}$ have to be simple and the only two nonzero proper submodules of $D^{2,\lambda,\chi}_r$.
  \item When $W^{n, \mathrm{id}}_{\lambda, \chi}$ is not simple and indecomposable. By Thm \ref{length 2}, $W^{n, \mathrm{id}}_{\lambda, \chi}$ has a unique nonzero proper submodule $W^{n, \mathrm{id}}_{\lambda, \chi, \mathrm{sub}}$. For any $g \in G / T \cdot G^{p^n}$, $g^{-1} W^{n, \mathrm{id}}_{\lambda, \chi, \mathrm{sub}} g$ is the unique nonzero proper submodule of $W^{n, g}_{\lambda, \chi}$. We construct 
  \[ D^{2,\lambda,\chi}_{r, \mathrm{sub}} := \bigoplus_{g \in G / T \cdot G^{p^n}} g^{-1} W^{n, \mathrm{id}}_{\lambda, \chi, \mathrm{sub}} g \] as a nonzero proper submodule of $D^{2,\lambda,\chi}_r$ via (\ref{D2 decomposition}). Same as above, $D^{2,\lambda,\chi}_r$ is of length at most 2. The submodule $D^{2,\lambda,\chi}_{r, \mathrm{sub}}$ has to be its only nonzero proper submodule.
\end{itemize}
We have verified the first part of the proposition for all three cases.

For the second part, the natural embedding $M \hookrightarrow D^{2,\lambda,\chi}_r$ induces
\[ \xymatrix@C=3cm{
  \bigoplus_{g \in G / T \cdot G^{p^n}} p^n_g(M) \ar[r] \ar[d]^{\mathrm{pr}^n_{n'}} & D^{2,\lambda,\chi}_r \ar[d] \\
  \bigoplus_{g' \in G / T \cdot G^{p^{n'}}} p^{n'}_{g'}(M') \ar[r] & D^{2,\lambda,\chi}_{r'}.\\
}\]
It is reduced to check the statement for $D^{2,\lambda,\chi}_r \to D^{2,\lambda,\chi}_{r'}$, which is clear from Prop \ref{Ugn decomposition}.
\end{proof}

Back to the Banach representation $V^2_\lambda$ of $U^2_D$, we consider its locally analytic vectors
\[ V^{2,\la}_\lambda = \{ f \in C^\la(U^2_D, \QQ_p) ~ | ~ f(gt) = \lambda(t)f(g) ~~ \for ~ \forall ~ t \in T^2_D,  ~ g \in D^\times \}, \] and its subrepresentation
\begin{eqnarray*} V^{2,\la}_{\lambda, \chi} & = & \{z \cdot v = \chi(z) v | v \in V^{2,\la}_\lambda \otimes_{\QQ_p} K, ~ \forall z \in Z(\frak{g}_K) \} \\ & = & \{ f \in C^\la(U^2_D, K) ~ | ~ f(gt) = \lambda(t)f(g) ~~ \for ~ \forall ~ t \in T^2_D, ~ g \in D^\times, ~ z \in Z(\frak{g}_K) \} \end{eqnarray*} cut off by the infinitesimal character $\chi$.

These locally analytic representations $V^{2,\la}_\lambda, V^{2,\la}_{\lambda, \chi}$ are respectively dual to $D(U^2_D, K)$-modules $D^{2,\lambda}, D^{2,\lambda, \chi}$ (\cite[Cor 3.3]{ST02J}).

\begin{lem}\label{nonzero la rep}
The locally analytic representation $V^{2,\la}_{\lambda, \chi}$ is nonzero.
\end{lem}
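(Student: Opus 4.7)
The plan is to establish the nonvanishing by dualizing and then using Prop \ref{Ugn decomposition} to exhibit explicit nonzero elements after completing at a suitable radius.

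First, I would reduce to showing that the distribution side $D^{2,\lambda,\chi}$ is nonzero. By the Schneider--Teitelbaum duality (\cite[Cor 3.3]{ST02J}) already invoked just above the lemma statement, $V^{2,\la}_{\lambda,\chi}$ is the continuous $K$-dual of the coadmissible $D(U^2_D, K)$-module $D^{2,\lambda,\chi}$, so its nonvanishing is equivalent to $D^{2,\lambda,\chi} \neq 0$.

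Next, I would use the natural base change map $D^{2,\lambda,\chi} \to D^{2,\lambda,\chi}_r$ induced by (\ref{ST dis}) for any admissible radius. Fixing $r = \sqrt[p^n]{1/p}$ with $n \geq 1$, Prop \ref{Ugn decomposition} identifies
\[ D^{2,\lambda,\chi}_r \;\cong\; \bigoplus_{g \in G / T \cdot G^{p^n}} W^{n,g}_{\lambda, \chi} \]
as a left $\wUgnK$-module. Each summand $W^{n,g}_{\lambda, \chi}$ is nonzero: indeed $W^{n,\mathrm{id}}_{\lambda,\chi}$ contains the explicit basis $\{w^k, w^k v \mid k \in \NN\}$ (as recorded in Rem \ref{power convergence rate} via Lem \ref{W basis} and its analogue at level $n$), and the other $W^{n,g}_{\lambda,\chi}$ are $G$-conjugate to it. Hence $D^{2,\lambda,\chi}_r \neq 0$.

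Since $D^{2,\lambda,\chi}$ surjects onto the nonzero $K$-vector space $D^{2,\lambda,\chi}_r$, it is itself nonzero, and dualizing yields $V^{2,\la}_{\lambda,\chi} \neq 0$. I do not anticipate a serious obstacle here: all the substantive work has already been done in Prop \ref{Ugn decomposition}, and this lemma is a direct harvest. The only minor point to be careful about is confirming that the base change $D^{2,\lambda,\chi} \to D^{2,\lambda,\chi}_r$ is defined and surjective on the image of $1 \in D(U^2_D,K)$, which follows from the density of $D(U^2_D, K)$ in $D_r(U^2_D, K)$ and the compatibility of tensor products with quotienting by the ideals defining $I^{2,\lambda}$ and the central character $\chi$.
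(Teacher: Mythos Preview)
Your proposal is correct and follows essentially the same strategy as the paper---reduce by duality to showing $1 \neq 0$ in $D^{2,\lambda,\chi}$ by mapping it to a module where $1$ is visibly nonzero. The paper's one-line proof does this more directly: it simply notes that $1 \in D^{2,\lambda,\chi}$ maps to $1 \neq 0$ in $W_{\lambda,\chi}$ (nonzero by Cor~\ref{W basis seris}), without passing through the $r$-completion or invoking Prop~\ref{Ugn decomposition}. Also note that your assertion that $D^{2,\lambda,\chi}$ \emph{surjects} onto $D^{2,\lambda,\chi}_r$ is not literally correct (the image is only dense), but as you recognize in your final paragraph, all that is actually needed is that $1$ has nonzero image there.
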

\begin{proof}
Its dual $D^{2,\lambda, \chi}$ is nonzero as it has a nonzero vector $1$ mapping to $0 \neq 1 \in W_{\lambda,\chi}$.
\end{proof}

\begin{prop}\label{restriction invariance}
Suppose $M$ is a closed subrepresentation of $V^{2,\la}_{\lambda, \chi}$.
If $f \in M$, $n \in \ZZ_{\geq 1}$, we have $f \cdot \chi_U \in M$ for any $T \cdot G^{p^n}$-coset $U$, where $\chi_U$ is the characteristic function vanishing outside $U$.
\end{prop}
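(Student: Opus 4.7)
The plan is to dualize and apply Prop \ref{M decomposition} to the submodule of $D^{2,\lambda,\chi}$ corresponding to $M$, then translate the resulting decomposition back to coset-wise support on the function side.

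First I would check that $f \cdot \chi_U$ still lies in $V^{2,\la}_{\lambda, \chi}$. Local analyticity is immediate since $\chi_U$ is locally constant; the $T^2_D$-equivariance with character $\lambda$ holds because any $T \cdot G^{p^n}$-coset $U$ satisfies $U \cdot T^2_D = U$ (as $G^{p^n}$ is normal in the uniform pro-$p$ group $U^2_D$); and the infinitesimal character condition follows since elements of $Z(\frak{g}_K)$ act as differential operators which annihilate $\chi_U$, hence commute with multiplication by it.

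For the main argument, by \cite[Cor 3.3]{ST02J} the space $V^{2,\la}_{\lambda, \chi}$ is the strong dual of $D^{2,\lambda, \chi}$, so the closed subrepresentation $M$ corresponds to a closed $D(U^2_D, K)$-submodule $N \subset D^{2,\lambda, \chi}$ whose annihilator under the natural pairing is $M$. For each $m \geq n$, let $N_m$ denote the closure of the image of $N$ in $D^{2,\lambda, \chi}_{r_m}$, with $r_m := \sqrt[p^m]{1/p}$; this is a closed $D^2_{r_m}$-submodule. Applying Prop \ref{M decomposition} at level $m$ gives
\[ N_m = \bigoplus_{g \in G / T \cdot G^{p^m}} p^m_g(N_m). \]
Let $V^{(m)} \subset V^{2,\la}_{\lambda, \chi}$ be the subspace dual to $D^{2,\lambda, \chi}_{r_m}$. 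The direct sum decomposition $D^{2,\lambda, \chi}_{r_m} = \bigoplus_g W^{m,g}_{\lambda, \chi}$ from Prop \ref{Ugn decomposition}, via the embeddings $w \mapsto w \cdot g^{-1}$, places each $W^{m,g}_{\lambda, \chi}$ as distributions supported on the relevant $T \cdot G^{p^m}$-coset; dually, $V^{(m)} = \bigoplus_g V^{(m), g}$ where $V^{(m), g}$ is the subspace of functions supported on the coset indexed by $g$. Taking annihilators of the decomposition of $N_m$ then yields $M \cap V^{(m)} = \bigoplus_g (M \cap V^{(m), g})$, so $f \cdot \chi_{U'} \in M$ for any $f \in M \cap V^{(m)}$ and any $T \cdot G^{p^m}$-coset $U'$.

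To conclude, given $f \in M$ and a $T \cdot G^{p^n}$-coset $U$, pick $m \geq n$ large enough that $f \in V^{(m)}$. Since $T \cdot G^{p^m} \subset T \cdot G^{p^n}$, the coset $U$ is a finite disjoint union of $T \cdot G^{p^m}$-cosets $U_1, \ldots, U_k$, and $f \cdot \chi_U = \sum_i f \cdot \chi_{U_i} \in M$. The main obstacle is the duality argument in the third paragraph: rigorously identifying the algebraic summands $W^{m,g}_{\lambda, \chi}$ with distributions of specific coset support on the function side, and ensuring that the completion $N \mapsto N_m$ is compatible with annihilators, so that $M \cap V^{(m)}$ really equals the annihilator of $N_m$ inside $V^{(m)}$.
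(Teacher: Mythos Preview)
Your approach is correct and shares the paper's overall strategy: pass to the annihilator $M^\perp \subset D^{2,\lambda,\chi}$ and invoke Prop~\ref{M decomposition}. The execution differs. You fix a single level $m$ large enough that $f \in V^{(m)}$, decompose $N_m$ once, and finish by summing over the $T\cdot G^{p^m}$-cosets inside $U$; this uses only the first half of Prop~\ref{M decomposition}. The paper instead keeps $f$ arbitrary and, for each $\delta \in M^\perp$, forms the coset-block projections $p^N_n(\delta)$ at \emph{every} level $N \geq n$, uses the compatibility half of Prop~\ref{M decomposition} to assemble them into an element $p_n(\delta) \in \varprojlim_N M^\perp_N \simeq M^\perp$, and then reads off $\delta(f\cdot\chi_U) = p_n(\delta)(f) = 0$. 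Your route trades the inverse-limit construction and the identification $\varprojlim_N M^\perp_N \simeq M^\perp$ for the duality statement you flag, namely that $M \cap V^{(m)}$ equals the annihilator of $N_m$ inside $V^{(m)}$; this is standard Fr\'echet--Stein/coadmissible bookkeeping (closed submodules of coadmissible modules are coadmissible, the image in each Banach piece is automatically closed, and the $\mathrm{ST}$ duality restricts level by level), so it is not a real obstacle. The identification of the summand $W^{m,g}_{\lambda,\chi}$ with distributions supported on the coset $g\,T\cdot G^{p^m}$ is needed equally in the paper's argument, where it underlies the key identity $\delta(f\cdot\chi_U) = p_n(\delta)(f)$.
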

\begin{proof}
Let $M^\perp$ be the submodule of $D^{2,\lambda,\chi}$ dual to $V^{2,\la}_{\lambda, \chi} / M$. It suffices to prove $\delta(f \cdot \chi_{T \cdot G^{p^n}}) = 0$ for all $\delta \in M^\perp$ and $n \geq 1$. For any $N \geq n$, we base change $M^\perp$ to $M^\perp_N := D^2_{r_N} \otimes_{D^{2,\lambda,\chi}} M^\perp$ where $r_N := \sqrt[p^N]{1 / p}$.

We define \[ p^N_n(\delta) := \sum_{g \in T \cdot G^{p^n} / T \cdot G^{p^N}} p^N_{g}(\delta), \]  where $p^N_{g}$ is the projection defined in front of Prop \ref{M decomposition}. 
By Prop \ref{M decomposition}, $p^N_n(\delta) \in M^\perp_N$ for all $N \geq n$. We define \[ p_n(\delta) := \varprojlim \big(p^N_n(\delta)\big) \in \varprojlim_{N \geq n} M^\perp_N \simeq M^\perp. \]
We have \[ \delta(f \cdot \chi_{T \cdot G^{p^n}}) = p_n(\delta)(f) = 0 \] for $\forall \delta \in M^\perp$, therefore $f \cdot \chi_{T \cdot G^{p^n}} \in M$.
\end{proof}

\begin{theorem}\label{la dense}
Suppose $\lambda$ is generic. If $M \subset V^{2,\la}_{\lambda, \chi}$ is a nonzero closed subrepresentation for an infinitesimal character $\chi$. We claim $M$ is dense in $V^2_\lambda \otimes_{\QQ_p} K$.
\end{theorem}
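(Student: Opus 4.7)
The plan is to show that the closure $\overline{M}$ of $M$ inside the Banach space $V^2_\lambda \otimes_{\QQ_p} K$ equals all of $V^2_\lambda \otimes_{\QQ_p} K$, using the two operations under which $M$ is stable: multiplication by characteristic functions of $T^2_D \cdot G^{p^n}$-cosets (Prop \ref{restriction invariance}) and translation by $U^2_D$ (since $M$ is a subrepresentation of $V^{2,\la}_{\lambda,\chi}$). Combined with the uniform continuity available on the compact group $U^2_D$, these ingredients suffice to produce a partition-of-unity style approximation of any target function.

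Concretely, fix any nonzero $f \in M$ and choose $g_0 \in U^2_D$ with $c := f(g_0) \neq 0$. For each $h \in U^2_D$ the translate $f_h := (hg_0^{-1}) \cdot f$ lies in $M$ and satisfies $f_h(h) = c$. Because left translation is an isometry with respect to the left-invariant filtration $\{G^{p^n}\}$, all the functions $f_h$ share a common uniform modulus of continuity with $f$. Given $\varphi \in V^2_\lambda \otimes_{\QQ_p} K$ (nonzero, WLOG) and $\varepsilon > 0$, uniform continuity of $\varphi$ and of the family $\{f_h\}$ lets us fix $n$ large enough that for all $x \in U^2_D$, $t \in T^2_D$ and $g' \in G^{p^n}$:
\[ |\varphi(xtg') - \lambda(t)\varphi(x)| < \tfrac{\varepsilon}{2}, \qquad |f_x(xtg') - \lambda(t) c| < \tfrac{\varepsilon|c|}{2\|\varphi\|_\infty}. \]
Both estimates follow from rewriting $\varphi(xtg') = \varphi((xt)g')$, applying uniform continuity to compare with $\varphi(xt)$, and invoking $\lambda$-equivariance $\varphi(xt) = \lambda(t)\varphi(x)$, and analogously for $f_x$ using $f_x(x) = c$. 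Here $|\lambda(t)| = 1$ on the compact torus $T^2_D$.

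With such $n$ fixed, pick representatives $h_1, \ldots, h_k$ of the (finitely many, by compactness) cosets of $T^2_D \cdot G^{p^n}$ in $U^2_D$ and form
\[ m := \sum_{i=1}^{k} \tfrac{\varphi(h_i)}{c}\, f_{h_i} \cdot \chi_{h_i T^2_D G^{p^n}} \in M, \]
where membership in $M$ is by Prop \ref{restriction invariance} applied term-by-term together with linearity. For any $y \in U^2_D$, $y$ lies in exactly one coset $h_i T^2_D G^{p^n}$; writing $y = h_i t g'$ (the $T^2_D \cap G^{p^n}$-ambiguity in this decomposition being immaterial since both estimates hold for any valid choice), a two-step triangle inequality combining the two displayed estimates yields $|m(y) - \varphi(y)| < \varepsilon$. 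Taking the supremum over $y$ gives $\|m - \varphi\|_\infty < \varepsilon$, so $\overline{M} = V^2_\lambda \otimes_{\QQ_p} K$ as required.

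The main conceptual work has already been absorbed into Prop \ref{restriction invariance}, which itself rests on the direct-sum decomposition of Prop \ref{M decomposition} and therefore on the genericity of $\lambda$; once that machinery is in place, the density argument above is essentially a standard compactness-plus-approximation construction. The only potential technical obstacle is securing uniform control over the entire family of translates $\{f_h\}_{h \in U^2_D}$ simultaneously, but this is immediate from the fact that left translation preserves the left-invariant uniformity of $U^2_D$, so every $f_h$ inherits the uniform modulus of continuity of $f$ itself.
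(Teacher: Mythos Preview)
Your proof is correct and follows the same strategy as the paper: both deduce density from Prop~\ref{restriction invariance} together with a partition-of-unity approximation in $C(U^2_D/T^2_D,K)$. The paper's three-sentence argument asserts that $f\cdot C^{\mathrm{sm}}(D^\times,K)$ is already dense in $V^2_\lambda\otimes_{\QQ_p}K$ for a single nonzero $f\in M$; taken literally this fails wherever $f$ vanishes, and the paper is tacitly relying on the $U^2_D$-equivariance of $M$ to move the nonvanishing locus around. Your version makes this step explicit by forming the translates $f_h$ with $f_h(h)=c\neq 0$ and checking that they share the uniform modulus of continuity of $f$, which is precisely the missing ingredient. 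So the two arguments are the same in substance, with yours supplying the detail the paper elides.
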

\begin{proof}
Let $X := D^\times / T_D$. There is a splitting $X \hookrightarrow D^\times \twoheadrightarrow X$ as $p$-adic manifolds such that 
$V^2_\lambda \otimes_{\QQ_p} K \cong C(X, K)$ as topological $K$-vector spaces.
We pick a nonzero function $f \in M$. The pointwise product of $f$ with any smooth function still has the infinitesimal character $\chi$. We have $f \cdot C^{\text{sm}}(D^\times, K) \subset V^{2,\la}_{\lambda, \chi}$ is dense in $V^2_\lambda \otimes_{\QQ_p} K \cong C(X, K)$. By Prop \ref{restriction invariance}, $f \cdot C^{\text{sm}}(D^\times, K)$ is included in $M$.
\end{proof}

\begin{cor}\label{abs irr}
  Suppose $\lambda$ is generic. The admissible Banach representation $V^2_\lambda$ is absolutely irreducible.
\end{cor}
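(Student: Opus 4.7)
The plan is to reduce absolute irreducibility of $V^2_\lambda$ to Theorem~\ref{la dense} via a Schneider--Teitelbaum duality argument. Let $W \subset V^2_\lambda \otimes_{\QQ_p} K$ be a nonzero closed $U^2_D$-subrepresentation; my aim is to show $W = V^2_\lambda \otimes_{\QQ_p} K$. First I observe that by \cite[Thm 7.1]{ST03}, applied to the admissible Banach subrepresentation $W$, the locally analytic vectors $W^\la := W \cap V^{2,\la}_\lambda$ are dense in $W$ and in particular nonzero.

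The main reduction step is to exhibit an infinitesimal character $\chi$ with $W \cap V^{2,\la}_{\lambda, \chi} \neq 0$. Given such a $\chi$, the intersection $W \cap V^{2,\la}_{\lambda, \chi}$ is a nonzero closed subrepresentation of $V^{2,\la}_{\lambda, \chi}$ (as the preimage of the closed subspace $W$ under the continuous inclusion $V^{2,\la}_{\lambda, \chi} \hookrightarrow V^2_\lambda \otimes_{\QQ_p} K$), so Theorem~\ref{la dense} yields density in $V^2_\lambda \otimes_{\QQ_p} K$; closedness of $W$ then forces $W = V^2_\lambda \otimes_{\QQ_p} K$.

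To produce such a $\chi$, I would dualize. If $W$ is proper, then $N := (V^2_\lambda \otimes_{\QQ_p} K / W)^\vee$ is a nonzero $K[[U^2_D]]$-submodule of $I^{2,\lambda} \otimes_{\QQ_p} K$. Through the composition $I^{2,\lambda} \otimes_{\QQ_p} K \hookrightarrow D^{2,\lambda} \otimes_{\QQ_p} K \twoheadrightarrow D^{2,\lambda, \chi}$, the condition $V^{2,\la}_{\lambda, \chi} \cap W = 0$ translates, via the duality between injective continuous maps and dense-image continuous maps, into: the image of $N$ in $D^{2,\lambda, \chi}$ is dense. Hence the task reduces to showing that a nonzero proper submodule $N \subset I^{2,\lambda} \otimes_{\QQ_p} K$ cannot have dense image in $D^{2,\lambda, \chi}$ for every infinitesimal character $\chi$.

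I expect this last density step to be the main obstacle. My plan is to leverage the finite-length structure of each $D^{2,\lambda, \chi}_r$ from Theorem~\ref{length 2} together with the coset decomposition of Proposition~\ref{Ugn decomposition}: density of the image of $N$ in $D^{2,\lambda, \chi}_r$ for every $\chi$ and every radius $r$, combined with the coadmissibility of $D^{2,\lambda} \otimes_{\QQ_p} K$ over $D(U^2_D, K)$ and the Iwasawa-integrality constraint $N \subset I^{2,\lambda} \otimes_{\QQ_p} K$, should force $N$ to exhaust $I^{2,\lambda} \otimes_{\QQ_p} K$ after passage to each Banach level, contradicting the properness of $W$.
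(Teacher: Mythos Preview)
Your overall architecture matches the paper's: start with a nonzero closed subrepresentation $W$, pass to $W^\la \neq 0$ via \cite[Thm~7.1]{ST03}, exhibit some infinitesimal character $\chi$ with $W \cap V^{2,\la}_{\lambda,\chi} \neq 0$, and then invoke Theorem~\ref{la dense} to conclude density, hence $W = V^2_\lambda \otimes_{\QQ_p} K$. The divergence is entirely in how you produce the $\chi$.

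The paper does not dualize here at all. It simply takes an irreducible closed subrepresentation $M_0 \subset W^\la$ and applies \cite[Cor~3.14, Cor~3.10]{DS13}, which is a Schur-type result guaranteeing that (after a finite extension of scalars) any absolutely irreducible admissible locally analytic representation carries an infinitesimal character. That immediately places $M_0$ inside $V^{2,\la}_{\lambda,\chi}$ for some $\chi$, and Theorem~\ref{la dense} finishes. So the existence of $\chi$ is a one-line citation, not an obstacle.

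Your dual reformulation---$V^{2,\la}_{\lambda,\chi} \cap W = 0$ iff the image of $N$ in $D^{2,\lambda,\chi}$ is dense---is correct, but the step you flag as ``the main obstacle'' is a genuine gap, not just a missing detail. You are asking: if a proper Iwasawa submodule $N \subsetneq I^{2,\lambda} \otimes K$ has dense image in $D^{2,\lambda,\chi}_r$ for \emph{every} $\chi$ and $r$, derive a contradiction. Neither Theorem~\ref{length 2} nor Proposition~\ref{Ugn decomposition} addresses this: both are statements about a \emph{fixed} $\chi$, and give no control on how the images of $N$ across different $\chi$ assemble. Even knowing $\bigcap_\chi \ker(D^{2,\lambda}_r \to D^{2,\lambda,\chi}_r) = 0$ would only give injectivity into the product, not that dense image in each factor forces $N$ to be all of $I^{2,\lambda} \otimes K$. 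Unwinding what you actually need---that the nonzero $K[[U^2_D]]$-module $I^{2,\lambda} \otimes K / N$, after passing to locally analytic vectors on the Banach side, contains a $Z(\frak{g})$-eigenvector---is exactly the content of \cite{DS13} that the paper invokes. Your sketch does not circumvent it.
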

\begin{proof}
  Suppose $V$ is a nonzero subrepresentation of $V^2_\lambda \otimes_{\QQ_p} K_0$ for a finite extension $K_0$ over $\QQ_p$.
  By \cite[Thm 7.1]{ST03}, $M^\la$ is nonzero in $V^2_\lambda \otimes_{\QQ_p} K_0$. There exists an irreducible subrepresentation $M_0$ of $V^\la$. By \cite[Cor 3.14, Cor 3.10]{DS13}, there exists a closed absolutely irreducible subrepresentation $M$ of $V^\la \otimes_{K_0} K \subset V^{2,\la}_{\lambda, \chi}$ with an infinitesimal character $\chi$ for a finite extension $K$ over $K_0$.
  By Thm \ref{la dense}, $M$ is dense in $V^2_\lambda \otimes_{\QQ_p} K$. As $M$ is contained in $V \otimes_{\QQ_p} K$, the subrepresentation $V$ has to be $V^2_\lambda \otimes_{\QQ_p} K_0$ itself.
\end{proof}

\begin{prop}\label{no inf char}
  The locally analytic vectors $V^{2,\la}_\lambda$ of the absolutely irreducible admissible Banach representation $V^2_\lambda$ as a locally analytic representation does not admit an infinitesimal character.
\end{prop}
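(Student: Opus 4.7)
The plan is a short contradiction argument. Suppose for contradiction that $V^{2,\la}_\lambda$ admits an infinitesimal character. After enlarging $K$ if necessary, this gives a character $\chi_0 : Z(\frak{g}_K) \to K$ such that $Z(\frak{g}_K)$ acts by $\chi_0$ on all of $V^{2,\la}_\lambda \otimes_{\QQ_p} K$, i.e.\ $V^{2,\la}_\lambda \otimes_{\QQ_p} K = V^{2,\la}_{\lambda,\chi_0}$. For any other infinitesimal character $\chi \neq \chi_0$ and any $v \in V^{2,\la}_{\lambda,\chi} \subset V^{2,\la}_\lambda \otimes_{\QQ_p} K$, the relations $z \cdot v = \chi(z) v = \chi_0(z) v$ for all $z \in Z(\frak{g}_K)$ would force $v = 0$. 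Hence $V^{2,\la}_{\lambda,\chi} = 0$ for every $\chi \neq \chi_0$.

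To close the contradiction, I need to exhibit at least one $\chi \neq \chi_0$ with $V^{2,\la}_{\lambda,\chi} \neq 0$. The center $Z(\frak{g}_K)$ is generated by the identity $I$ together with the Casimir $\Delta = \iota h^2 + p w^2 - v^2$ of (\ref{casimir}). The value $\chi(I)$ is rigidly determined by $\lambda$ through its restriction to the central subgroup $Z_D \cap T_D^2$, but $\chi(\Delta) \in K$ may be prescribed freely, yielding a continuous family of candidate infinitesimal characters. Lemma \ref{nonzero la rep}, whose proof only uses the non-vanishing of $1 \in W_{\lambda,\chi}$ via Corollary \ref{W basis seris}, applies uniformly to each member of this family and produces $V^{2,\la}_{\lambda,\chi} \neq 0$. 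Picking any such $\chi$ with $\chi(\Delta) \neq \chi_0(\Delta)$ contradicts the previous paragraph, completing the proof.

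There is essentially no technical obstacle specific to this proposition: all of the real work has already been absorbed into Lemma \ref{nonzero la rep} and into the construction of the nonzero modules $W_{\lambda,\chi}$ for arbitrary $\chi$. The only point worth verifying carefully in the write-up is that the parameter space of admissible infinitesimal characters on $V^{2,\la}_\lambda$ is more than a single point, which is immediate from the one-parameter freedom in $\chi(\Delta)$ described above.
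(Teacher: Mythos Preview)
Your proposal is correct and is essentially the same argument as the paper's: the paper's proof is a single sentence invoking Lemma~\ref{nonzero la rep} to produce nonzero vectors of every infinitesimal character, and you have simply unpacked that sentence, making explicit the compatibility constraint on $\chi(I)$ and the freedom in $\chi(\Delta)$.
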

\begin{proof}
This is clear from Lem \ref{nonzero la rep} as $V^2_\lambda$ contains vectors of any infinitesimal character. 
\end{proof}

\begin{proof}[Proof of Theorem \ref{counterexamples}]
The representation $V_\lambda$ can be further decomposed as 
\[ V_\lambda \cong \bigoplus_{c \in \cO_D^\times / (T^1_D \times \mu_{p-1} \cdot U^2_D)} V^2_{c \circ \lambda}, ~ \mathrm{where} \]
\[ V^2_{c \circ \lambda} = \{ f \in C(U^2_D \cdot c, \QQ_p) ~ | ~ f(gt) = \lambda(t)f(g) ~~ \for ~ \forall ~ t \in T^2_D, ~ g \in U^2_D \} \]
as a representation of $U^2_D$, similar to the identification in Lem \ref{V lambda adm}.
Since $\lambda$ is generic in the sense of Def \ref{infinitesimal genericity}, all $c \circ \lambda$ are generic.
All $V^2_{c \circ \lambda}$ are absolutely irreducible admissible Banach representation of $U^2_D$ by Cor \ref{abs irr}. 
The representation $V_\lambda$ is of finite length and semisimple over $U^2_D$, so is it over $D^\times$.

Consider any absolutely irreducible admissible Banach subrepresentation $V_{\lambda, \bullet}$ of $V_\lambda$ over $D^\times$ (may go to a finite extension of $\QQ_p$ by \cite[Thm 1.1]{DS13}).
$V_{\lambda, \bullet}$ is a direct sum of some of $V^2_{c \circ \lambda}$ over $U^2_D$.
Similar to Prop \ref{no inf char}, its locally analytic vectors $V_{\lambda, \bullet}^\la$ do not have finite length nor admit a central character as $V_{\lambda, \bullet}^\la$ contains vectors of any infinitesimal character by Lem 5.5.
\end{proof}

\end{document}